\newcommand{\textcyr}[1]{%
% {\fontencoding{OT2}\fontfamily{cmr}\fontseries{m}\fontshape{n}\selectfont #1}}
 {\fontencoding{OT2}\fontfamily{wncyr}\fontseries{m}\fontshape{n}\selectfont #1}}
\newcommand{\Sha}{{\mbox{\textcyr{Sh}}}}
\def\act#1#2%
\newcommand{\Z}{{\mathbb Z}}
\newcommand{\Q}{{\mathbb Q}}
\newcommand{\R}{{\mathbb R}}
\newcommand{\F}{{\mathbb F}}
\newcommand{\PP}{{\mathbb P}}
\newcommand{\kbar}{{\overline{k}}}
\newcommand{\Cbar}{{\overline{C}}}
\newcommand{\calJ}{{\mathcal J}}
\newcommand{\fm}{{\mathfrak m}}
\newcommand{\To}{\longrightarrow}
\DeclareMathOperator{\End}{End}
\DeclareMathOperator{\Hom}{Hom}
\DeclareMathOperator{\disc}{disc}
\DeclareMathOperator{\Gal}{Gal}
\DeclareMathOperator{\Res}{Res}
\DeclareMathOperator{\Pic}{Pic}
\DeclareMathOperator{\Jac}{Jac}
\DeclareMathOperator{\HH}{H}
\DeclareMathOperator{\GL}{GL}
\DeclareMathOperator{\SL}{SL}
\DeclareMathOperator{\Sp}{Sp}
\newcommand{\res}{\operatorname{res}}
\newtheorem{Theorem}{Theorem}
\newtheorem{Lemma}[Theorem]{Lemma}
\newtheorem{Proposition}[Theorem]{Proposition}
\newtheorem{Example}[Theorem]{Example}
\newtheorem{Remark}[Theorem]{Remark}
\numberwithin{equation}{section}
\begin{document}

\title{Most binary forms come from a pencil of quadrics}

\author{Brendan Creutz}
\address{School of Mathematics and Statistics, University of Canterbury, Private Bag 4800, Christchurch 8140, New Zealand}
\email{brendan.creutz@canterbury.ac.nz}
\urladdr{http://www.math.canterbury.ac.nz/\~{}bcreutz}
%\date{12 January 2016}
%\subjclass{11E76, 11E12, 11G30, 11G45, 14K02}

\begin{abstract}
A pair of symmetric bilinear forms $A$ and $B$ determine a binary form $f(x,y) := \disc(Ax-By)$. We prove that the question of whether a given binary form can be written in this way as a discriminant form generically satisfies a local-global principle and deduce from this that most binary forms over $\Q$ are discriminant forms. This is related to the arithmetic of the hyperelliptic curve $z^2 = f(x,y)$. Analogous results for non-hyperelliptic curves are also given.
\end{abstract}

\maketitle

%%%%%%%%%%%%%%%%%%%
%%%%%%%%%%%%%%%%%%%
\section{Introduction}%%%%%%%%
%%%%%%%%%%%%%%%%%%%
%%%%%%%%%%%%%%%%%%%

	A pair of symmetric bilinear forms $A$ and $B$ in $n$ variables over a field $k$ of characteristic not equal to $2$ determine a binary form
	\begin{equation*}
		f(x,y) := \disc(Ax-By) = (-1)^{\frac{n(n-1)}{2}}\det(Ax - By) :=  f_0x^n + f_1x^{n-1}y + \cdots + f_ny^n\,
	\end{equation*}
	of degree $n$. The question of whether a given binary form can be written as a discriminant form in this way is studied in \cite{BG, Wang, BGW_AIT2, BGW}. We prove that the property of being a discriminant form generically satisfies a local-global principle and deduce from this that most binary forms over $\Q$ are discriminant forms.
	
	It is easy to see that a binary form $f \in k[x,y]$ is a discriminant form if and only if the forms $c^2f$ are too, for every $c \in k^\times$. When $k = \Q$, it thus suffices to consider integral binary forms, in which case we define the height of $f$ to be $H(f) := \max\{|f_i|\}$ and consider the finite sets $N_n(X)$ of integral binary forms of degree $n$ with $H(f) < X$. We prove:
	
	\begin{Theorem}\label{thm:1}
		For any $n \ge 3$,
		\[
			\liminf_{X \to \infty}\frac{\#\{ f \in N_n(X) \;:\; \text{$f$ is a discriminant form over $\Q$\}}}{\#N_n(X)} > 75\%\,.
		\]
		Moreover, these values tend to $100\%$ as $n \to \infty$.
	\end{Theorem}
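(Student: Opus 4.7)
The proof strategy is to combine the local-global principle announced in the introduction with a density computation at each completion of $\Q$. Concretely, I would isolate an open subset $\calU_n$ of the space of integral binary forms of degree $n$ on which the discriminant-form property satisfies local-global, and show that $\#(\calU_n \cap N_n(X))/\#N_n(X) \to 1$ as $X \to \infty$ via a standard sieve of Ekedahl--Bhargava--Shankar type. This reduces the liminf of the theorem to a lower bound on the density of $f \in N_n(X)$ that are discriminant forms over every completion $\Q_v$.

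Next, I would compute the $v$-adic density $\rho_{n,v}$ of discriminant forms at each place. At almost all non-archimedean places the local obstruction is cohomological and vanishes, giving $\rho_{n,p} = 1$; at $p = 2$ and finitely many other bad primes one uses the orbit parametrisation of Bhargava--Gross--Wang to compute or lower-bound $\rho_{n,p}$ directly. At the archimedean place a form of odd degree over $\R$ should automatically be a discriminant form, while for even $n$ the only obstruction is a fixed sign; hence $\rho_{n,\infty} = 1$ for odd $n$ and $\rho_{n,\infty} = 1 - 2\delta_n$ for even $n$, where $\delta_n$ denotes the archimedean proportion of positive-definite degree-$n$ binary forms. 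The quantity $\delta_n$ decays to $0$ with $n$, since a random real binary form of high degree has many real roots in expectation.

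Finally, an independence / product argument supplied by the sieve gives
\[
\liminf_{X \to \infty}\frac{\#\{f \in N_n(X) : f \text{ is a discriminant form over } \Q\}}{\#N_n(X)} \;\geq\; \prod_v \rho_{n,v}\,,
\]
and it remains to verify that this product exceeds $3/4$ for every $n \geq 3$ and tends to $1$ as $n \to \infty$. The main obstacle is the tail control needed to make the sieve rigorous uniformly in $p$: one requires that the density of forms which either leave the generic locus $\calU_n$ or fail to be discriminant forms at a prime $p$ decays fast enough in $p$ to yield a convergent and manageable product, in the spirit of the uniform estimates underpinning the geometry-of-numbers arguments of Bhargava, but with obstruction invariants specific to the discriminant-form problem. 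A secondary but still delicate point is the explicit calculation of $\rho_{n,\infty}$ (and $\rho_{n,2}$ when $n$ is even) for small $n$, where the margin above $3/4$ is thinnest.
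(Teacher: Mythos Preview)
Your overall architecture matches the paper's: restrict to a density-one subset on which the local-global principle holds (in the paper this is the set of $f$ with $\Gal(f(x,1)) \simeq S_n$, via Theorem~3, and density one comes from van der Waerden), and then bound below the proportion of forms that are discriminant forms everywhere locally.

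The substantive difference is in how the local step is handled. The paper does not attempt to compute the local densities $\rho_{n,v}$ of discriminant forms. Instead it invokes the sufficient condition from \cite[Theorem~28]{BGW}: if the hyperelliptic curve $z^2 = f(x,1)$ has a $k_v$-point then $f$ is a discriminant form over $k_v$. For odd $n$ this holds automatically (there is a rational Weierstrass point at infinity), and for even $n$ the proportion of $f$ whose curve is everywhere locally soluble was already computed by Poonen--Stoll \cite{PS_ctpairing,PS_densities}; those numbers exceed $75\%$ for every genus and tend to $1$. No new sieve or local computation is needed.

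Your direct route, by contrast, rests on the unjustified assertion that $\rho_{n,p} = 1$ for all but finitely many $p$. The obstruction group $\HH^2(\Q_p, J_{\frak m}[2])$ is Tate-dual to $\calJ[2]^{\Gal_{\Q_p}}$, which is nonzero whenever $f(x,1)$ admits a nontrivial $\Q_p$-rational factorisation, so one cannot simply declare the obstruction class to vanish; and indeed for $n=2$ one has $\prod_p \rho_{2,p} = 0$, so the claim fails there outright. Even if it turns out to be true for even $n \ge 4$, proving it and then carrying out the explicit small-prime computations you allude to is considerably more work than citing Poonen--Stoll. (A minor slip: over $\R$ only \emph{negative} definite forms are obstructed, so $\rho_{n,\infty} = 1 - \delta_n$, not $1 - 2\delta_n$.)
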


	When $n$ is even the corresponding $\limsup$ is strictly less than $100\%$, as can been seen by local considerations. For example, a square free binary form over $\R$ is a discriminant form if and only if it is not negative definite (see \cite[Section 7.2]{BGW}), a property which holds for a positive proportion of binary forms over $\R$. One is thus led to ask whether local obstructions are the only ones. This question was posed in \cite[Question 7.2]{BruinStoll}, albeit using somewhat different language. When $n = 2$, the answer is yes and turns out to be equivalent to the Hasse principle for conics and in this case the limit appearing in Theorem~\ref{thm:1} is the probability that a random conic has a rational point, which is $0$ (see \cite[Theorem 1.4]{BCF2}).
	
	  When $k$ is a number field, define the height of $f$ to be the height of the point $(f_0:\cdots:f_n)$ in weighted projective space $\PP^n(2:\cdots:2)$, and set $N_{n,k}(X)$ to be the finite set of degree $n$ binary forms over $k$ of height at most $X$. We prove the following:
	
	\begin{Theorem}\label{thm:2}
		Let $k$ be a number field. For any $n \ge 1$, 
		\[
			\lim_{X \to \infty}\frac{\#\{ f \in N_{n,k}(X) \;:\; \text{$f$ is a discriminant form over $k$}\}}
			{\#\{ f \in N_{n,k}(X) \;:\; \text{$f$ is a discriminant form everywhere locally}\}} = 100\%
		\]		
	\end{Theorem}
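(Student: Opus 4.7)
The plan is to reformulate the question, via the results of Bhargava-Gross-Wang, as a triviality statement for a distinguished Galois cohomology class, and then bound the relevant obstruction on average using orbit-counting. Specifically, by \cite{BGW} (see also \cite{Wang,BGW_AIT2}), one attaches to each binary form $f$ of degree $n$ with non-zero discriminant a finite \'etale $k$-group scheme $M_f$ closely related to $J_f[2]$ (where $J_f$ is the Jacobian of the double cover $C_f : z^2 = f(x,y)$) together with a distinguished class $\alpha_f \in H^1(k, M_f)$ such that $f$ is a discriminant form over $k$ if and only if $\alpha_f = 0$, and likewise over each completion $k_v$ if and only if the restriction $\alpha_f|_{k_v}$ vanishes. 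Consequently $f$ is a discriminant form everywhere locally if and only if $\alpha_f \in \Sha^1(k, M_f)$, and Theorem~\ref{thm:2} reduces to showing that for $100\%$ of $f \in N_{n,k}(X)$ with $\alpha_f \in \Sha^1(k, M_f)$ one actually has $\alpha_f = 0$.

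\medskip

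To accomplish this, I would establish the average bound
\[
\sum_{f \in N_{n,k}(X)} \bigl(\,|\Sha^1(k, M_f)| - 1\,\bigr) \;=\; o\bigl(\#N_{n,k}(X)\bigr),
\]
which, given that the set $A_X$ of $f$ that are discriminant forms everywhere locally has positive density in $N_{n,k}(X)$, implies $|\Sha^1(k, M_f)| = 1$ for $100\%$ of $f \in A_X$, and hence $\alpha_f = 0$ on this density-$1$ subset. Using the natural map from $\Sha^1(k, M_f)$ into the $2$-Selmer group $\Sel_2(J_f/k)$, the problem reduces to an analogous average bound on $|\Sel_2(J_f/k)|$ across the family $f \in N_{n,k}(X)$, accessible by the orbit-counting and geometry-of-numbers techniques of \cite{BG, Wang} applied to integer pencils of symmetric bilinear forms.

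\medskip

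The main obstacle is to strengthen a standard ``bounded on average'' Selmer estimate---which by itself still allows $|\Sha^1| \ge 2$ on a positive-density set---to one whose mean value actually tends to $1$. The plan is to combine a Hilbert-irreducibility argument, isolating a density-$1$ set of $f$ on which the image of $\Gal(\bar{k}/k)$ in $\Aut(J_f[2])$ is as large as possible, with a Chebotarev-type equidistribution argument that, on this good set, exhibits a local obstruction at some place $v$ of $k$ for every non-trivial class in $H^1(k, M_f)$, thereby trivialising $\Sha^1(k, M_f)$ outside a set of density zero. Making these ideas quantitative enough to yield the sharper average bound is the technical heart of the argument.
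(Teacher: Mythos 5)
The paper's proof of Theorem~\ref{thm:2} is a short reduction chain, not an averaging argument. By van der Waerden's theorem (and more generally Hilbert irreducibility), $100\%$ of binary forms of degree $n$ over $k$ have splitting field with Galois group $S_n$; Theorem~\ref{thm:2} then follows immediately from Theorem~\ref{thm:3}, which is a \emph{deterministic} local-global principle: for \emph{every} $f$ with full Galois group, locally-a-discriminant-form implies globally-a-discriminant-form. Theorem~\ref{thm:3} in turn reduces (via Bhargava--Gross--Wang) to the vanishing $\Sha^2(k,J_\frak{m}[2]) = 0$, which by Tate global duality becomes $\Sha^1(k, \calJ[2]) = 0$, and which the Chebotarev mechanism of Lemma~\ref{lem:vanishSha} reduces to the finite-group-cohomology statement $\HH^1_*(S_n,\calJ[2]) = 0$, proved directly via transpositions in Lemma~\ref{lem:SncalJ2}.

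Your proposal has the right general shape in its final paragraph but several concrete errors and an unnecessary detour. First, the obstruction class is not in $\HH^1(k, M_f)$: the descent obstruction from \cite{BGW} lives in $\HH^2(k, J_\frak{m}[2])$ (it is the image of $[J^1_\frak{m}] \in \HH^1(k, J_\frak{m})$ under the coboundary of multiplication-by-$2$), so a degree shift and Tate duality are needed before any $\Sha^1$ appears, and the relevant module after dualizing is $\calJ[2]$, not a subobject of $J_f[2]$; consequently your attempted map into $\Sel_2(J_f/k)$ does not exist as stated. Second, and more fundamentally, the averaging strategy cannot work: the average size of the $2$-Selmer group of Jacobians of hyperelliptic curves in these families is bounded but bounded \emph{away from $1$} (by Bhargava--Gross it is $3$ in the odd and even hyperelliptic families), so $\sum_f (|\Sel_2(J_f/k)|-1)$ grows linearly, and your proposed bound $\sum_f\bigl(|\Sha^1(k,M_f)|-1\bigr) = o(\#N_{n,k}(X))$ cannot be deduced from it. You correctly flag this gap yourself, but the way to close it is not to sharpen the average; it is to observe that on the full-Galois-group locus the vanishing of $\Sha$ is \emph{exact}, not merely frequent, which turns the analytic problem into a single clean group cohomology computation. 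Your ``plan B'' gestures in this direction but leaves the entire content --- the Chebotarev reduction to $\HH^1_*$ of cyclic subgroups and the explicit verification that $\HH^1_*(S_n,\calJ[2])=0$ --- unaddressed, and this is where all the work of the paper actually lies.
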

	
	It is known that a square free binary form $f(x,y)$ is a discriminant form over $k$ if the smooth projective hyperelliptic curve with affine model given by $z^2 = f(x,1)$ has a rational point \cite[Theorem 28]{BGW}. In particular binary forms of odd degree are discriminant forms. Results of Poonen and Stoll allow one to compute the proportion of hyperelliptic curves over $\Q$ of fixed genus that have points everywhere locally \cite{PS_ctpairing,PS_densities}. This gives lower bounds on the proportion of binary forms of fixed even degree that are locally discriminant forms. Computing these bounds and applying Theorem~\ref{thm:2}, one obtains Theorem~\ref{thm:1}.
	
	Theorem~\ref{thm:2} states that the property of being a discriminant form satisfies a local-global principle {\em generically}. This is rather surprising given that such a local-global principle does not hold {\em in general}. For example, there is a positive density set of positive square free integers $c$ such that the binary form
	\begin{equation}\label{eq:counterexample}
		f(x,y) = c(x^2 + y^2)(x^2 + 17y^2)(x^2-17y^2) \in \Q[x,y]
	\end{equation}
	is a discriminant form locally, but not over $\Q$ (see \cite[Theorem 11]{CreutzBLMS}). Of course, the forms appearing in~\eqref{eq:counterexample} are not generic. It is well known (as was first proved over $\Q$ by van der Waerden \cite{vdW}) that 100\% of degree $n$ univariate polynomials over a number field have Galois group $S_n$. Therefore Theorem~\ref{thm:2} is a consequence of the following:
	
	\begin{Theorem}\label{thm:3}
		Suppose $f(x,y) \in k[x,y]$ is a binary form of degree $n$ over a global field $k$ of characteristic not equal to $2$ and such that $f(x,1)$ has Galois group $S_n$. If $f(x,y)$ is a discriminant form everywhere locally, then $f(x,y)$ is a discriminant form over $k$.
	\end{Theorem}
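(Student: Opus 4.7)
The plan is to use the cohomological parametrization of discriminant form representations from \cite{BGW, BGW_AIT2} to translate the property of being a discriminant form into the vanishing of a class in Galois cohomology, and then to exploit the $S_n$-hypothesis to reduce the local-global principle to the Hasse principle for squares.

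First, I would dispose of the odd degree case. If $n$ is odd, the smooth hyperelliptic curve $C \colon z^2 = f(x,y)$ carries a rational Weierstrass point at infinity, and so by \cite[Theorem 28]{BGW} the form $f$ is a discriminant form over $k$ unconditionally. We may therefore assume $n = 2g+2$ is even.

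Set $L = k[x]/(f(x,1))$; since $\Gal(f) = S_n$, the algebra $L$ is a degree-$n$ field extension of $k$. The $2$-torsion of $J := \Jac(C)$ admits a presentation as a subquotient of the permutation module on the affine Weierstrass points, sitting in an exact sequence of Galois modules
\begin{equation*}
    0 \to \F_2 \to \ker\bigl(\Res_{L/k}\F_2 \xrightarrow{\Sigma} \F_2\bigr) \to J[2] \to 0,
\end{equation*}
where $\Sigma$ is summation. Shapiro's lemma identifies $H^1(k, \Res_{L/k}\F_2)$ with $L^\times/L^{\times 2}$, and the BGW theory attaches to $f$ a class $\delta_f$ in a suitable quotient of $H^1(k, J[2])$ whose vanishing (both globally and at each completion $k_v$) characterizes $f$ being a discriminant form. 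Unwinding the connecting map, the image of $\delta_f$ is an explicit element $\alpha_f \in L^\times/L^{\times 2}$ whose norm down to $k$ is a square modulo $k^{\times 2}$.

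The hypothesis that $f$ is everywhere locally a discriminant form translates under this identification to the statement that $\alpha_f$ becomes a square at every completion $L_w$ of $L$. The Hasse principle for squares (that is, $\Sha^1(L, \mu_2) = 0$) then forces $\alpha_f$ to be a global square, so $\delta_f$ can only survive as a contribution from the scalar ambiguity $H^1(k, \F_2) = k^\times/k^{\times 2}$. A direct character-theoretic computation with the $S_n$ action on $\F_2^n$ modulo the diagonal shows $J[2](k) = 0$ for $n \geq 3$, which pins down this ambiguity and forces $\delta_f$ itself to vanish, so $f$ is a discriminant form over $k$.

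The main technical difficulty I anticipate lies in the precise identification of $\delta_f$ with $\alpha_f$: matching the intrinsic invariant attached to a pencil of quadrics with an explicit element of $L^\times/L^{\times 2}$, and carefully tracking the scalar ambiguity in $k^\times/k^{\times 2}$ that arises from the second connecting map. This is exactly where the generality of \cite{BGW, BGW_AIT2} is needed, and where the $S_n$-hypothesis rules out the pathological forms such as~\eqref{eq:counterexample} --- whose splitting behavior creates nontrivial rational $2$-torsion and allows the local-global principle to fail. Once the cohomological dictionary is in place, the Hasse principle for squares finishes the proof cleanly.
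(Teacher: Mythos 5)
The central difficulty here is that you have misidentified where the obstruction lives. By \cite[Theorems 13 and 24]{BGW}, when $n$ is even the obstruction to the existence of a pencil of quadrics over $k$ with discriminant form $f$ is a class in $\HH^2(k, J_\frak{m}[2])$, not in $\HH^1$. (When a rational orbit exists, the set of orbits is parametrized by a coset of a subgroup of $\HH^1(k, J_\frak{m}[2])$; but whether a rational orbit exists at all is governed by $\HH^2$.) This is why the paper deduces Theorem~\ref{thm:3} from Theorem~\ref{thm:4}, which asserts $\Sha^2(k,J_\frak{m}[2]) = 0$, rather than from any $\Sha^1$ statement. Your proposed class $\delta_f \in \HH^1(k,J[2])$ with image $\alpha_f \in L^\times/L^{\times 2}$ does not encode the obstruction, and the reduction to ``the Hasse principle for squares'' cannot get off the ground.

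In fact, if the obstruction really were an element of $L^\times/L^{\times 2}$ required to be a square, the theorem would hold with no hypothesis on the Galois group at all, since $\Sha^1(L,\mu_2) = 0$ for any number field $L$ --- contradicting the genuine counterexample \eqref{eq:counterexample} exhibited in the paper. The correct route (the paper's) uses Tate's global duality to reduce $\Sha^2(k,J_\frak{m}[2]) = 0$ to $\Sha^1(k,\calJ[2]) = 0$, where $\calJ[2] = J_\frak{m}[2]^\vee \cong \Res_{\Delta}\mu_2/\mu_2$ is \emph{not} $J[2]$ but ``subsets of $\{1,\dots,n\}$ modulo complements.'' Since $\calJ[2]$ is a quotient rather than a subobject of the induced module $\Res_\Delta\mu_2$, a locally trivial class in $\HH^1(k,\calJ[2])$ only lifts to an $\alpha \in L^\times/L^{\times 2}$ that is everywhere locally in the image of $k_v^\times$, and promoting that to global membership in the image of $k^\times$ is a genuine local-global problem that requires the $S_n$-hypothesis. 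This is precisely what Lemma~\ref{lem:SncalJ2} in the paper accomplishes: it shows $\HH^1_*(S_n,\calJ[2])=0$ by an explicit cocycle calculation exploiting the transpositions generating $S_n$ together with the non-degenerate pairing $e : J_\frak{m}[2] \times \calJ[2] \to \Z/2\Z$, and then Lemma~\ref{lem:vanishSha} (a Chebotarev argument) converts this into $\Sha^1 = 0$. This entire step --- the mathematical heart of the theorem --- is absent from your proposal. (The observation that $J[2]^{S_n} = 0$, while true for $n \ge 3$, plays no role in the paper's argument and does not substitute for it.)
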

	
	A square free binary form $f(x,y)$ of even degree gives an affine model of a smooth hyperelliptic curve $C:z^2=f(x,1)$ with two points at infinity. As shown in \cite{BGW} the $\SL_n(k)$-orbits of pairs $(A,B)$ with discriminant form $f(x,y)$ correspond to $k$-forms of the maximal abelian covering of $C$ of exponent $2$ unramified outside the pair of points at infinity. Geometrically these coverings arise as pullbacks of multiplication by $2$ on the generalized Jacobian $J_\frak{m}$ where $\frak{m}$ is the modulus comprising the points at infinity. The Galois-descent obstruction to the existence of such coverings over $k$ (and hence to the existence of a pencil of quadrics over $k$ with discriminant form $f(x,y)$) is an element of $\HH^2(k,J_\frak{m}[2])$ (\cite[Theorems 13 and 24]{BGW}). The Galois action on $J_\frak{m}[2]$ factors faithfully through the Galois group of $f(x,1)$, so Theorem \ref{thm:3} follows from:
	
	\begin{Theorem}\label{thm:4}
		Suppose $C$ is a hyperelliptic curve of genus $g$ over a global field $k$ of characteristic not equal to $2$ and that $\Gal(k(J_\frak{m}[2])/k) = S_{2g+2}$, then $\Sha^2(k,J_\frak{m}[2]) = 0$, i.e., an element of $\HH^2(k,J_\frak{m}[2])$ is trivial if it is everywhere locally trivial.
	\end{Theorem}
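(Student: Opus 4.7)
The plan is to apply Tate--Poitou duality to recast the statement as $\Sha^1(k,M^*)=0$, where $M^*=\Hom(J_\frak{m}[2],\F_2)$. Writing $P=\F_2^{2g+2}$ for the $S_{2g+2}$-permutation module on the roots of $f(x,1)$ and $\epsilon\colon P\to\F_2$ for the augmentation, one has $J_\frak{m}[2]\cong P/\langle\mathbf{1}\rangle$ and $M^*\cong\ker\epsilon$, fitting in
\[
0\to M^*\to P\xrightarrow{\epsilon}\F_2\to 0.
\]
Let $K=k(J_\frak{m}[2])$ and $L=k[x]/(f(x,1))$; by hypothesis $\Gal(K/k)=S_{2g+2}$ and $L$ is a field of degree $2g+2$ whose absolute Galois group is the preimage of the stabiliser $S_{2g+1}\subset S_{2g+2}$.

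The first reduction is $\Sha^1(k,M^*)\subseteq\HH^1(S_{2g+2},M^*)$: since $M^*$ is trivial as a $G_K$-module and $\Sha^1(K,\F_2)=0$ by the Hasse principle for squares, inflation--restriction shows any class in $\Sha^1(k,M^*)$ is inflated from $\HH^1(\Gal(K/k),M^*)$. Next I compute $\HH^1(S_{2g+2},M^*)$ from the long exact sequence of the displayed sequence. Shapiro's lemma identifies $\HH^i(S_{2g+2},P)\cong\HH^i(S_{2g+1},\F_2)$, so $\HH^1(S_{2g+2},P)\cong\F_2$ (generated by the sign character). The induced map $\HH^1(S_{2g+2},P)\to\HH^1(S_{2g+2},\F_2)$ is corestriction $\HH^1(S_{2g+1},\F_2)\to\HH^1(S_{2g+2},\F_2)$ between two copies of $\F_2$; it vanishes because restriction is an isomorphism (sign restricts to sign) and $\Cor\circ\Res=[S_{2g+2}:S_{2g+1}]=2g+2$ is even. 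A short diagram chase then yields
\[
0\to\F_2\to\HH^1(S_{2g+2},M^*)\to\F_2\to 0,
\]
so there are three nontrivial classes to test.

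The left-hand $\F_2$ is generated by the image $\delta$ of $1\in\HH^0(k,\F_2)$ under the connecting map, and $\delta$ is locally trivial at $v$ if and only if $f(x,1)$ has an odd-degree factor over $k_v$; by Chebotarev $\delta$ is therefore nontrivial at any place whose Frobenius is a $(2g+2)$-cycle. Any class $\tau$ lifting the generator of $\HH^1(S_{2g+2},P)$ has image in $\HH^1(k,P)\cong L^\times/L^{\times 2}$ equal, via Kummer theory and Shapiro's lemma, to the element cut out by the sign character of $S_{2g+1}=\Gal(K/L)$; a direct calculation identifies this with $\disc(f)\in k^\times\subseteq L^\times$. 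At a place $v$ whose Frobenius is a transposition, $f(x,1)$ has $2g$ linear factors over $k_v$ producing places $w\mid v$ with $L_w=k_v$, and since a transposition has sign $-1$, $\disc(f)\notin k_v^{\times 2}$; the $w$-component of the image of $\tau$ is therefore nontrivial, so $\tau$ and $\tau+\delta$ are both locally nontrivial at such $v$. Since Chebotarev provides Frobenii of both cycle types, every nontrivial class in $\HH^1(S_{2g+2},M^*)$ has a local obstruction and $\Sha^1(k,M^*)=0$.

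The main obstacle is the concrete matching of the group-cohomology class $\tau$ with the arithmetic invariant $\disc(f)\in L^\times/L^{\times 2}$ via Kummer--Shapiro; this dictionary is what translates Frobenius cycle types into computable local invariants, and once it is in place the Chebotarev step is routine.
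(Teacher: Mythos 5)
Your overall strategy --- reduce via Tate--Poitou duality to $\Sha^1(k,M^\vee)=0$, then pass via inflation--restriction and Chebotarev to a statement about the group cohomology of $S_{2g+2}$, and finally kill the relevant classes by exhibiting explicit local obstructions --- is exactly the architecture of the paper's proof (Lemma~\ref{lem:vanishSha} followed by Lemma~\ref{lem:SncalJ2}). However, there is a concrete error in the module identification that invalidates the cohomology computation you then perform.

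You write $J_\frak{m}[2]\cong P/\langle\mathbf{1}\rangle$ and $M^*\cong\ker\epsilon$, but these are swapped. The generalized Jacobian sits in $0\to\G_m\to J_\frak{m}\to J\to 0$, so $J_\frak{m}[2]$ contains a copy of $\mu_2\cong\F_2$ as a \emph{sub}module; concretely (as in \cite[Section 5]{PoonenSchaefer} and the paper's proof of Theorem~\ref{thm:5}(1)), $J_\frak{m}[2]\cong\Res^1_\Delta\mu_2$ is the module of even-parity subsets, i.e.\ $\ker\epsilon$, while its Cartier dual is $\calJ[2]\cong\Res_\Delta\mu_2/\mu_2\cong P/\langle\mathbf{1}\rangle$, the subsets modulo complements. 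For $n=2g+2$ even these two $S_n$-modules are \emph{not} isomorphic: $\ker\epsilon$ has the trivial module $\langle\mathbf{1}\rangle$ as a submodule and $J[2]$ as quotient, whereas $P/\langle\mathbf{1}\rangle$ has $J[2]$ as a submodule and a trivial quotient. Consequently the short exact sequence you take cohomology of should be $0\to\F_2\to P\to P/\langle\mathbf{1}\rangle\to 0$, not $0\to\ker\epsilon\to P\to\F_2\to 0$. The long exact sequence then gives (since $(P/\langle\mathbf{1}\rangle)^{S_n}=0$ and the map $\HH^1(S_n,\F_2)\to\HH^1(S_n,P)$ is the restriction $\HH^1(S_n,\F_2)\to\HH^1(S_{n-1},\F_2)$, which is an isomorphism)
\[
\HH^1\bigl(S_n,P/\langle\mathbf{1}\rangle\bigr)\;\cong\;\ker\Bigl(\HH^2(S_n,\F_2)\xrightarrow{\Res}\HH^2(S_{n-1},\F_2)\Bigr),
\]
which is a different computation from the one you carried out, and in particular the classes do not arise as lifts of the sign in $\HH^1(S_n,P)$ or as the image of $1\in\HH^0(S_n,\F_2)$. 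Your explicit Chebotarev analysis (``$\delta$ detects whether $f$ has an odd-degree factor,'' ``$\tau$ corresponds to $\disc(f)\in L^\times/L^{\times 2}$'') is correct for $\ker\epsilon$, but is calibrated to the wrong module: it would prove $\Sha^1(k,J_\frak{m}[2])=0$, not $\Sha^1(k,\calJ[2])=0$, and by duality the former gives $\Sha^2(k,\calJ[2])=0$ rather than the assertion $\Sha^2(k,J_\frak{m}[2])=0$ of Theorem~\ref{thm:4}.

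For comparison, the paper avoids computing $\HH^1(S_n,\calJ[2])$ outright. Its Lemma~\ref{lem:SncalJ2} shows the finer statement $\HH^1_*(S_n,\calJ[2])=0$ (kernel of restriction to all cyclic subgroups) by a short cocycle argument exploiting the pairing: for a transposition $\tau_t=(t,t+1)$ one has $\tau_t(Q)+Q=e(P_t,Q)\tilde P_t$, and non-degeneracy of $e$ on $J_\frak{m}[2]\times\calJ[2]$ lets one choose a single $Q$ witnessing coboundariness on every $\langle\tau_t\rangle$ simultaneously. Since the $\tau_t$ generate $S_n$, the cocycle is a coboundary. That route sidesteps both the need to determine $\HH^1(S_n,\calJ[2])$ as a group and the need to name explicit representatives. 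If you correct the module and redo the explicit computation for $P/\langle\mathbf{1}\rangle$ your approach may well succeed, but as written there is a genuine gap.
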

		
	This result is all the more surprising given that the analogous statement for the usual Jacobian is not true! There exist hyperelliptic curves of genus $g$ with Jacobian $J$, generic Galois action on $J[2]$ and such that $\Sha^1(k,J[2]) \simeq \Sha^2(k,J[2]) \ne 0$. A concrete example is given in \cite[Example 3.20]{PoonenRains}; see also Example~\ref{ex:more} below. This leads one to suspect that there may exist locally solvable hyperelliptic curves whose maximal abelian unramified covering of exponent $2$ does not descend to $k$ (or, equivalently, that the torsor $J^1$ parameterizing divisor classes of degree $1$ is not divisible by $2$ in the group $\HH^1(k,J)$). This can happen when the action of Galois on $J[2]$ is not generic; an example is given in \cite[Theorem 6.7]{CreutzViray}. But Theorem \ref{thm:4} implies that it cannot happen when the Galois action is generic:

	\begin{Theorem}
		Suppose $C$ is an everywhere locally soluble hyperelliptic curve satisfying the hypothesis of Theorem 4 and let $\Cbar$ denote the base change to a separable closure of $k$. Then
		\begin{itemize}
			\item[(a)] the maximal unramified abelian covering of $\Cbar$ of exponent $2$ descends to $k$, and
			\item[(b)] the maximal abelian covering of $\Cbar$ of exponent $2$ unramified outside $\frak{m}$ descends to $k$.			
		\end{itemize}
	\end{Theorem}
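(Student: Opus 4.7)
The plan is to deduce both statements from Theorem~\ref{thm:4} combined with the descent-theoretic interpretation of the coverings in question developed in \cite{BGW}.

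For~(b), by \cite[Theorems 13 and 24]{BGW} the descent obstruction for a $k$-form of the maximal abelian covering of $\Cbar$ of exponent $2$ unramified outside $\frak{m}$ is an element $\alpha \in \HH^2(k, J_\frak{m}[2])$. For each place $v$ of $k$, a point $P_v \in C(k_v)$ supplied by local solubility determines a $k_v$-form of this covering (pull back multiplication by $2$ along the Abel--Jacobi map $C \to J_\frak{m}$ based at $P_v$), so $\alpha$ localizes to zero at $v$. Hence $\alpha \in \Sha^2(k, J_\frak{m}[2])$, which vanishes by Theorem~\ref{thm:4}.

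For~(a), the corresponding descent obstruction lies in $\HH^2(k, J[2])$, where $J = \Jac(C)$ is the ordinary Jacobian. I would reduce the vanishing of this obstruction to Theorem~\ref{thm:4} via the canonical short exact sequence
\begin{equation*}
	0 \To T \To J_\frak{m} \To J \To 0,
\end{equation*}
in which $T$ is the one-dimensional torus arising from the modulus $\frak{m}$ (split by at most the quadratic extension $k(\sqrt{f_0})/k$ distinguishing the two points at infinity). Since $[2]$ is geometrically surjective on $T$, the snake lemma yields an exact sequence of Galois modules
\begin{equation*}
	0 \To T[2] \To J_\frak{m}[2] \To J[2] \To 0.
\end{equation*}
The induced long exact sequence in Galois cohomology, together with the standard vanishing of $\Sha^3$ for the finite module $T[2]$ over the global field $k$ (a consequence of Tate--Poitou duality), propagates the vanishing of $\Sha^2(k, J_\frak{m}[2])$ from Theorem~\ref{thm:4} to $\Sha^2(k, J[2]) = 0$. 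Part~(a) then follows by the same local-to-global argument as in~(b): local solubility places the obstruction inside $\Sha^2(k, J[2])$, which is trivial.

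The main anticipated obstacle is the careful bookkeeping of the torus $T$ in the case where Galois swaps the two points of $\frak{m}$, together with the precise identification of the descent obstruction in~(a) as an element of $\HH^2(k, J[2])$; both should nevertheless be manageable consequences of the $2$-descent machinery in \cite{BGW} and standard facts about Galois cohomology of finite modules over global fields.
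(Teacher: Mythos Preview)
Your argument for (b) is correct and matches the paper's: the obstruction lives in $\HH^2(k,J_\frak{m}[2])$ by \cite{BGW}, local solubility forces it into $\Sha^2(k,J_\frak{m}[2])$, and Theorem~\ref{thm:4} kills it.

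Your argument for (a), however, has a genuine gap. You claim that the exact sequence $0 \to T[2] \to J_\frak{m}[2] \to J[2] \to 0$ together with $\Sha^3(k,T[2])=0$ ``propagates'' the vanishing of $\Sha^2(k,J_\frak{m}[2])$ to $\Sha^2(k,J[2])$. But $\Sha$ is not an exact functor: an element of $\Sha^2(k,J[2])$ lifts to $\HH^2(k,J_\frak{m}[2])$ (since its image in $\HH^3(k,T[2])$ lies in $\Sha^3=0$), yet there is no reason the lift can be chosen to lie in $\Sha^2(k,J_\frak{m}[2])$; locally the lift may differ from zero by a nontrivial class coming from $\HH^2(k_v,T[2])$. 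In fact the conclusion you are aiming for is \emph{false}: the paper explicitly notes (just before the theorem, and in Example~\ref{ex:more}) that there are hyperelliptic curves with $\Gal(k(J[2])/k)=S_{2g+2}$ and $\Sha^2(k,J[2]) \simeq \Sha^1(k,J[2]) \ne 0$, e.g.\ $C:y^2=x^6+x+6$. So $\Sha^2(k,J[2])$ cannot be shown to vanish under these hypotheses, and the specific obstruction for (a) need not be tracked through $\Sha^2(k,J[2])$ at all.

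The paper's fix is much simpler and avoids $J[2]$ entirely: the covering in (a) is the maximal \emph{unramified} subcovering of the covering in (b). Once (b) descends to $k$, so does any Galois-stable intermediate covering, in particular (a). That is the whole proof.
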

		\begin{proof}
			The covering in (a) is the maximal unramified subcovering of that in (b), while (b) follows from the Theorem \ref{thm:4} and the discussion preceding it.
		\end{proof}

	Theorem \ref{thm:4} generalizes to the context considered in \cite{CreutzBF}, which we now briefly summarize. Given a curve $C$, an integer $m$ and a reduced base point free effective divisor $\frak{m}$ on $C$, multiplication by $m$ on the generalized Jacobian $J_\frak{m}$ factors through an isogeny $\varphi:A_\frak{m}\to J_\frak{m}$ whose kernel is dual to the Galois module $\calJ[m] := (\Pic(C_\kbar)/\langle\frak{m}\rangle)[m]$. In the situation considered above $m = \deg(\frak{m}) = 2$ and $\varphi$ is multiplication by $2$ on $J_\frak{m}$ (in this case the duality is proved in \cite[Section 6]{PoonenSchaefer}). Via geometric class field theory the isogeny $\varphi$ corresponds to an abelian covering of $C_\kbar$ of exponent $m$ unramified outside $\frak{m}$. The maximal unramified subcoverings of the $k$-forms of this ramified covering are the $m$-coverings of $C$ parameterized by the explicit descents in \cite{BruinStoll,CreutzMathComp,BPS}. The Galois-descent obstruction to the existence of such a covering over $k$ is the class in $\HH^2(k,A_\frak{m}[\varphi])$ of the coboundary of $[J^1_\frak{m}]$ from the exact sequence $0 \to A_\frak{m}[\varphi] \to A_\frak{m} \to J_\frak{m} \to 0$.
	
	The following theorem says that the group $\Sha^2(k,A_\frak{m}[\varphi])$ is trivial provided the action of Galois on the $m$-torsion of the Jacobian is sufficiently generic. Theorem \ref{thm:4} is case (1).
	\begin{Theorem}\label{thm:5}
		Suppose $k$ is a global field of characteristic not dividing $m$, $C$ is a smooth projective and geometrically integral curve of genus $g$ over $k$, and let $m,\frak{m}$ and $\varphi$ be as above. In all of cases listed below, $\Sha^2(k,A_\frak{m}[\varphi]) = 0$.
		\begin{enumerate}
			\item $m = \deg(\frak{m}) = h^0([\frak{m}]) = 2$ and $\Gal(k(J_\frak{m}[2])/k) \simeq S_{2g+2}$.
			\item $m = 2$, $\frak{m}$ is a canonical divisor and $\Gal(k(J[2])/k) \simeq \Sp_{2g}(\F_2)$.
			\item $g=1$ and $m = \deg(\frak{m}) = 2$.
			\item $g=1$, $m = \deg(\frak{m}) = p^r$ for some prime $p$ and integer $r \ge 1$ and neither of the following hold:
				\begin{enumerate}
					\item The action of the absolute Galois group, $\Gal_k$, on $J[p]$ is reducible.
					\item The action of $\Gal_k$ on $J[p]$ factors through the symmetric group $S_3$.
				\end{enumerate}
		\end{enumerate}
	\end{Theorem}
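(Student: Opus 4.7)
\medskip
\noindent\textbf{Proof plan.} The natural first move is to pass to the Cartier dual. Since $A_\frak{m}[\varphi]$ is (by definition) Cartier-dual to $\calJ[m]$ as a $\Gal_k$-module, Poitou--Tate duality supplies a perfect pairing
\[
\Sha^2(k, A_\frak{m}[\varphi]) \times \Sha^1(k, \calJ[m]) \To \Q/\Z,
\]
so it is enough to establish $\Sha^1(k, \calJ[m]) = 0$ in each of the four cases. This reformulation is convenient because $\calJ[m]$ is a concrete sub-object of $\Pic(C_\kbar)$, whereas $A_\frak{m}$ is somewhat synthetic.

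Next I would reduce to a group-theoretic question. Let $L = k(\calJ[m])$ and $G = \Gal(L/k)$. Inflation-restriction gives
\[
0 \To H^1(G, \calJ[m]) \To H^1(k, \calJ[m]) \To \Hom_G(\Gal_L, \calJ[m]),
\]
and a standard application of \v Cebotarev shows that any $\alpha \in \Sha^1(k,\calJ[m])$ (i) has trivial image in the right-hand term, because unramified places with trivial Frobenius force the $G$-equivariant homomorphism to vanish on a generating set of $\Gal_L^{\mathrm{ab}}/m$, and (ii) corresponds to a class in $H^1(G,\calJ[m])$ whose restriction to every cyclic subgroup $\langle\sigma\rangle \subset G$ is trivial, because \v Cebotarev provides an unramified place $v$ with Frobenius $\sigma$, and the local class at such a $v$ is the inflation of $\mathrm{res}_{\langle\sigma\rangle}\alpha$. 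Hence it suffices to prove, in each of the four cases, the purely group-theoretic statement
\[
\ker\Bigl(H^1(G,\calJ[m]) \To \prod_{\sigma\in G} H^1(\langle\sigma\rangle, \calJ[m])\Bigr) \;=\; 0\,.
\]

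The remaining work is case-by-case representation theory. For case (1), $\calJ[2]$ is a quotient of the standard permutation $\F_2$-module for $S_{2g+2}$ (encoding subsets of the Weierstrass locus modulo the hyperelliptic class), and the cohomology of such modules over symmetric groups is explicitly computable; every potential nonzero class in $H^1$ is detected on a transposition. For case (2), $\calJ[2] \cong J[2]$ is the standard symplectic module for $\Sp_{2g}(\F_2)$, and the corresponding $H^1$ is well understood (e.g. by results of Pollatsek or Sah), with any nonzero class again detected on a transvection. For cases (3) and (4), the elliptic curve $E$ underlying $\Jac(C)$ makes $\calJ[m]$ an extension built from $E[m]$ and the class of $\frak{m}$; the hypotheses in (4) rule out exactly the reducible and $S_3$-type images where $H^1(G,E[p])$ contains classes trivial on all cyclic subgroups, while (3) handles the very small-image case $p=2$ directly.

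The principal obstacle will be step three in cases (1) and (2): although the Galois module $J[m]$ is classical, the modified module $\calJ[m] = (\Pic(C_\kbar)/\langle\frak{m}\rangle)[m]$ differs from it by a nontrivial extension encoding the class of $\frak{m}$, and one must verify both that $H^1(G,\calJ[m])$ remains small and that no new class appears which is killed by restriction to every cyclic subgroup. This will require writing down the defining sequence
\[
0 \To \langle[\frak{m}]\rangle/m \To \calJ[m] \To J[m] \To 0
\]
(or its natural variant), analysing the induced long exact sequence in $G$-cohomology, and checking cyclic-detection separately on the sub and quotient. Once this is done, cases (3) and (4) fall out of the known classification of mod-$p$ Galois representations of elliptic curves.
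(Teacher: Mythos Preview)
Your reduction strategy---Poitou--Tate duality followed by the Chebotarev argument reducing $\Sha^1(k,\calJ[m])$ to the vanishing of $\HH^1_*(G,\calJ[m])$---is exactly what the paper does, and your treatment of case~(1) is essentially correct. However, there are two genuine errors in the case-by-case part.

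First, the defining sequence you write down is backwards. The correct sequence is
\[
0 \To J[m] \stackrel{\iota}{\To} \calJ[m] \stackrel{\frac{1}{\ell}\deg}{\To} \Z/m\Z \To 0,
\]
with $J[m]$ as the \emph{sub}module and $\Z/m\Z$ as the quotient (an element of $\calJ[m]$ is a class $D$ with $mD \sim a\frak{m}$; the map records $a \bmod m$). This direction matters: the paper's key reduction lemma uses the long exact sequence in this form to lift any class in $\HH^1_*(G',\calJ[m])$ back to $\HH^1(G,J[m])$, using that $\HH^1_*(G',\Z/m\Z)=0$.

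Second, and more seriously, your handling of case~(2) would fail. You assert $\calJ[2]\cong J[2]$, but $\calJ[2]$ has order $2^{2g+1}$ and sits in the non-split extension above; what is true is only that the Galois action on $\calJ[2]$ factors through $\Sp_{2g}(\F_2)$. Worse, your proposed mechanism---that every nonzero class in $\HH^1(\Sp_{2g}(\F_2),J[2])$ is detected on a transvection---is false. By Pollatsek, $\HH^1(\Sp_{2g}(\F_2),J[2])$ is one-dimensional, generated by the coboundary $\delta(1)$ of the sequence above; this is precisely the theta-characteristic class, and it lies in $\HH^1_*$ (equivalently, every element of $\Sp_{2g}(\F_2)$ fixes some theta characteristic; cf.\ Poonen--Rains). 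So $\HH^1_*(\Sp_{2g}(\F_2),J[2])\ne 0$, and cyclic detection on $J[2]$ cannot work. The paper's argument instead observes that, after lifting $\HH^1_*(G',\calJ[2])$ into $\HH^1(\Sp,J[2]) = \langle\delta(1)\rangle$, the class $\delta(1)$ maps to zero in $\HH^1(\Sp,\calJ[2])$ by exactness, so the target vanishes. This is the step your plan is missing.
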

	
	\begin{Remark}
		The statement and proof of the theorem depend only on the cohomology of the $\Gal_k$-module $\calJ[m] := (\Pic(C_\kbar)/\langle\frak{m}\rangle)[m]$ and its dual $A_\frak{m}[\varphi] = \calJ[m]^\vee$. If one likes, this can be taken as the definition of $A_\frak{m}[\varphi]$, and the isogeny can be ignored. 
	\end{Remark}
	
	In the case of genus one curves, the corresponding coverings can be described using the period-index obstruction map in \cite{CFOSS}. For example, a genus $1$ hyperelliptic curve $C:z^2 = f(x,y)$ can be made into a $2$-covering of its Jacobian. If $f(x,y)$ is the discriminant form of the pair $(A,B)$, then the quadric intersection $C':A=B=0$ in $\PP^3$ is a lift of $C$ to a $4$-covering of the Jacobian. This covering has trivial period-index obstruction in the sense described in \cite{CFOSS} and, conversely, any lift to a $4$-covering with trivial period-index obstruction may be given by an intersection of quadrics which generate a pencil with discriminant form $f(x,y)$. The analogous statement holds for any $m \ge 2$ (see \cite{CreutzBF}). Using this and Theorem \ref{thm:5} we obtain the following:
		
	\begin{Theorem}
		Fix $m \ge 2$. For 100\% of locally solvable genus one curves $C$ of degree $m$ there exists a genus one curve $D$ of period and index dividing $m^2$ such that $m[D] = [C]$ in the group $\HH^1(k,\Jac(C))$ paramterizing isomorphism classes of torsors under the Jacobian of $C$.
	\end{Theorem}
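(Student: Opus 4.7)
The plan is to reinterpret the existence of $D$ as the vanishing of a class in $\Sha^2(k, A_\frak{m}[\varphi])$, in the notation of Theorem~\ref{thm:5}, where $\frak{m}$ is a base-point-free effective divisor of degree $m$ on $C$, and then invoke Theorem~\ref{thm:5}. According to the paragraph preceding the statement, a genus one curve $D$ with $m[D]=[C]$ and with period and index both dividing $m^2$ is precisely a lift of $C$ to an $m^2$-covering of $J=\Jac(C)$ whose period-index obstruction in the sense of \cite{CFOSS} is trivial. The obstruction to the existence of such a lift over $k$ is the coboundary of $[J^1_\frak{m}]$ along the sequence $0 \to A_\frak{m}[\varphi] \to A_\frak{m} \to J_\frak{m} \to 0$, which lives in $\HH^2(k, A_\frak{m}[\varphi])$.

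If $C$ is locally solvable then $C(k_v) \ne \emptyset$ for every place $v$, so locally one may take $[D_v]=0$, which has period and index equal to $1$; the local component of the obstruction therefore vanishes, and the obstruction lies in $\Sha^2(k, A_\frak{m}[\varphi])$. Writing $m = \prod_{p \mid m} p^{r_p}$, the kernel $A_\frak{m}[\varphi]$ decomposes $\Gal_k$-equivariantly into its $p$-primary components (corresponding to the factorization of $\varphi$ into prime-power isogenies), so $\Sha^2(k, A_\frak{m}[\varphi]) = \bigoplus_{p \mid m} \Sha^2(k, A_\frak{m}[\varphi_{p^{r_p}}])$. Each summand is then the genus-one, prime-power case covered by Theorem~\ref{thm:5}(4), and vanishes as soon as the action of $\Gal_k$ on $J[p]$ is irreducible and does not factor through $S_3$. (For the summand with $p=2$ and $r_p = 1$ one may instead appeal to case (3), which requires no Galois hypothesis.)

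What remains, and where I expect the main obstacle to lie, is the density statement that $100\%$ of locally solvable genus one curves $C$ of degree $m$ --- ordered by a natural height on the standard parameter space of degree-$m$ genus one models (binary quartics for $m = 2$, ternary cubics for $m = 3$, quadric intersections in $\PP^3$ for $m = 4$, and analogously for larger $m$) --- have the property that $\Gal_k$ acts on $J[p]$ irreducibly and not through $S_3$, for every prime $p \mid m$. For each fixed $p$, both exceptional conditions cut out a thin subset of the parameter space, so a Hilbert-irreducibility / geometric sieve argument on the moduli of degree-$m$ genus one curves produces density zero for the corresponding exceptional loci; taking the finite union over $p \mid m$ preserves density zero, and intersecting with the locally solvable locus leaves a density-$100\%$ subset therein. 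Combined with the cohomological reformulation above, this yields the theorem.
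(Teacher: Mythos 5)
Your reformulation, via the coboundary $\delta[J^1_{\fm}] \in \HH^2(k,A_{\fm}[\varphi])$ and the observation that local solubility forces this class into $\Sha^2(k,A_{\fm}[\varphi])$, matches the paper's intended route exactly (the paper gives no explicit proof beyond the sentence ``Using this and Theorem~\ref{thm:5} we obtain the following''). Your decomposition $\Sha^2(k,A_{\fm}[\varphi]) = \bigoplus_{p\mid m}\Sha^2(k,A_{\fm}[\varphi]_p)$ is a sensible way to feed a composite $m$ into a theorem whose genus-one clauses are stated only for $m = p^r$; it is legitimate because the reduction in the proof of Theorem~\ref{thm:5}(4) passes through Lemma~\ref{lem:H1Ga} and ends up depending only on $\HH^1(G_r,J[p^r])$, not on $\deg(\fm)$ itself. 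The Hilbert-irreducibility argument for the Galois conditions is also the standard one the author must have in mind.

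There is, however, a genuine gap at $p=2$ when $r_2 \ge 2$ (i.e.\ whenever $4 \mid m$), and your parenthetical does not close it. Condition (b) of Theorem~\ref{thm:5}(4) asks that the action of $\Gal_k$ on $J[p]$ \emph{not} factor through $S_3$; but $\Aut(J[2]) = \GL_2(\F_2) \simeq S_3$, so for $p=2$ the action always factors through $S_3$ and the hypothesis of Theorem~\ref{thm:5}(4) is never satisfied. Case~(3) rescues only $r_2 = 1$ (it proves $\HH^1(G,J[2])=0$ directly), and the inductive Lemma~\ref{lem:caser>1} cannot be invoked for $p=2$ either: the $G_1$-modules $J[2]$ and $\End(J[2])$ \emph{do} share an irreducible subquotient (a Brauer-character count for $S_3$ over $\F_2$ shows $J[2]\otimes J[2]$ has composition factors $\mathbf{1},\mathbf{1},J[2]$). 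So nothing in Theorem~\ref{thm:5} kills the summand $\Sha^2(k,A_{\fm}[\varphi]_2)$ once $4 \mid m$, and your claimed density-$100\%$ statement that the action on $J[p]$ ``does not factor through $S_3$ for every $p\mid m$'' actually holds for \emph{zero} percent of curves when $p=2$. You should either restrict $m$ (e.g.\ to $m$ odd, $m \equiv 2 \pmod 4$, or prime powers with $p$ odd) or supply a new argument for the $2$-primary part with $r_2 \ge 2$. It is worth flagging that the paper's own bald statement ``Fix $m\ge 2$'' appears to suffer from the same difficulty.
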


		When $m = 2$ case (3) of Theorem \ref{thm:5} shows that we may replace ``$100\%$'' with ``all''. This was first proved in the author's PhD thesis~\cite[Theorem 2.5]{CreutzPhD}. It would be interesting to determine if this is always true when $m$ is prime. In this case it is known that there always exists $D$ such that $m[D] = [C]$ \cite[Section 5]{Cassels} (but not for composite $m$ \cite{CreutzBLMS}). However, it is uknown whether $D$ may be chosen to have index dividing $m^2$.
	
	The proportion of locally solvable genus one curves of degree $3$ has been computed by Bhargava-Cremona-Fisher \cite{BCF}. As $100\%$ of cubic curves satisfy the hypothesis in case (4) of Theorem~\ref{thm:5}, this yields the following:
	
\begin{Theorem}
	At least 97\% of cubic curves $C$ admit a lift to a genus one curve $D$ of period and index $9$ such that $3[D]=[C]$ in the Weil-Ch\^atelet group of the Jacobian.
\end{Theorem}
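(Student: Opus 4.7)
The plan is to combine the preceding Theorem (applied with $m=3$) with the density result of Bhargava--Cremona--Fisher \cite{BCF}. I would proceed in three steps.

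First, I would verify that $100\%$ of cubic curves satisfy the hypothesis of case~(4) of Theorem~\ref{thm:5} with $m=3$, namely that the action of $\Gal_k$ on $J[3]$ is irreducible and does not factor through $S_3$. The generic plane cubic has mod-$3$ Galois image equal to $\GL_2(\F_3)$, which acts irreducibly on $\F_3^2$ and has order $48$, so the image cannot be a subquotient of $S_3$; a Hilbert irreducibility / van der Waerden style argument on the space of ternary cubic forms shows that the exceptional locus (where the Galois image is strictly smaller than $\GL_2(\F_3)$) has density zero.

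Second, I would invoke the preceding Theorem with $m=3$ to conclude that $100\%$ of locally soluble plane cubic curves $C$ admit a lift to a genus one curve $D$ with $3[D] = [C]$ in $\HH^1(k,\Jac(C))$ and with period and index both dividing $9$. Combining with \cite{BCF}, which establishes a lower bound exceeding $97\%$ for the proportion of plane cubic curves over $\Q$ that are everywhere locally soluble, one obtains that at least $97\%$ of cubic curves admit such a lift.

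Third, to upgrade ``dividing $9$'' to ``equal to $9$'': for a plane cubic $C$ without a rational point, $[C] \in \HH^1(k,\Jac(C))$ has period exactly $3$ (rather than $1$), and then $3[D] = [C] \ne 0$ forces the period of $D$ to be $9$; since for a genus one curve the period divides the index which in turn divides $9$, the index equals $9$ as well. The $97\%$ threshold is chosen to remain valid after removing the (small) proportion of cubics with rational points from the locally-soluble count. The principal technical point is the genericity statement of the first step; the substantive arithmetic content has already been supplied by Theorem~\ref{thm:5} and \cite{BCF}, so this corollary follows essentially by bookkeeping.
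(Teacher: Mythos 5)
Your steps 1 and 2 are essentially the paper's own (very brief) proof: combine case (4) of Theorem~\ref{thm:5} --- valid for $100\%$ of cubics since the generic mod-$3$ image is all of $\GL_2(\F_3)$, which is irreducible and obviously not a subquotient of $S_3$ --- with the preceding Theorem applied to $m=3$ and the Bhargava--Cremona--Fisher density ($\approx 97.3\%$) of everywhere locally soluble plane cubics. That part is fine, and it is the same route the paper takes.

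Your step 3 is an addition the paper does not make explicit, and it is where a gap appears. Your period-9/index-9 deduction for cubics with $C(k)=\emptyset$ is correct: $3[D]=[C]\neq 0$ rules out period dividing $3$, while $9[D]=3[C]=0$ and $\mathrm{index}(D)\mid 9$ then pin both invariants to $9$. However, the step of ``removing the (small) proportion of cubics with rational points'' is not free. To preserve the $97\%$ lower bound you need the density of plane cubics over $\Q$ with a rational point to be smaller than the slack $\rho - 97\% \approx 0.3\%$, where $\rho$ is the BCF constant. This is not established in the paper, not cited, and (to my knowledge) not a theorem --- the density of plane cubics with a rational point is not known to be small, let alone zero. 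So as written, step 3 does not close; you are trading the stated bound against a quantity you have no control over. Two clean ways to fix this: either read the statement as ``period and index \emph{dividing} $9$'' (consistent with the immediately preceding Theorem, which says ``dividing $m^2$''), in which case step 3 is unnecessary; or restrict the conclusion to locally soluble cubics without rational points and drop the unconditional $97\%$ claim. Note also that your phrasing of case (4)(b) is slightly off for $r=1$: in the proof of Theorem~\ref{thm:5}(4) the $S_3$ condition is only invoked when $r>1$, so for $m=p=3$, $r=1$ irreducibility alone suffices --- but since the generic image is $\GL_2(\F_3)$, this does not affect the outcome.
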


\section{Proof of Theorem \ref{thm:5}}

		For a $\Gal_k$-module $M$ let
			\[
				\Sha^i(k,M) := \ker\left(\HH^i(k,M) \stackrel{\prod \res_v}\To \prod_{v}\HH^i(k_v,M)\right)\,,
			\]
		the product running over all completions of $k$. For a finite group $G$ and $G$-module $M$ define
			\[
				\HH_*^i(G,M) := \ker\left(\HH^i(G,M) \stackrel{\prod \res_g}\To \prod_{g \in G}\HH^i(\langle g \rangle,M)\right)\,.
			\]

		\begin{Lemma}
			\label{lem:vanishSha}
			Suppose $M$ is a finite $\Gal_k$-module and let $G := \Gal(k(M)/k)$ be the Galois group of its splitting field over $k$. Then
			\begin{enumerate}
				\item\label{lem:vanishSha1} $\Sha^1(k,M)$ is contained in the image of 
			$\HH_*^1(G,M)$ under the inflation map,
				\item\label{lem:vanishSha2} if $\HH^1_*(G,M) = 0$, then $\Sha^1(k,M)  = 0$, and
				\item\label{lem:vanishSha3} if $\HH^1_*(G,M^\vee) = 0$, then $\Sha^2(k,M)  = 0$.
			\end{enumerate}
		\end{Lemma}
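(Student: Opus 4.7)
The plan is to prove (1) via the inflation--restriction sequence together with Chebotarev density, to observe (2) as an immediate consequence, and to derive (3) from (2) applied to $M^\vee$ using Poitou--Tate global duality. The underlying principle is that Chebotarev identifies the ``local triviality everywhere'' condition defining $\Sha^1(k,M)$ with the ``triviality on every cyclic subgroup'' condition defining $\HH^1_*(G,M)$.

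For (1), set $K := k(M)$ and $H := \Gal_K$, so that $H$ acts trivially on $M$ and $G = \Gal_k/H$. The inflation--restriction sequence
\[
0 \to \HH^1(G,M) \xrightarrow{\inf} \HH^1(k,M) \xrightarrow{\res} \Hom(H,M)^G
\]
reduces the claim to two steps. Given $\xi \in \Sha^1(k,M)$, I first show that its image $\bar\xi \in \Hom(H,M)^G$ vanishes. The restriction of $\bar\xi$ to the decomposition subgroup $D_w \subseteq H$ at any place $w$ of $K$ is a further restriction of $\res_{w\cap k}(\xi)=0$, so $\bar\xi$ vanishes on every decomposition group in $H$; Chebotarev density applied to the finite extension of $K$ cut out by $\ker(\bar\xi)$ then forces $\bar\xi = 0$. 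Hence $\xi = \inf(\alpha)$ for some $\alpha \in \HH^1(G,M)$. To verify $\alpha \in \HH^1_*(G,M)$, I use Chebotarev again: for each $g \in G$, pick a place $v$ of $k$, unramified in $K/k$, whose Frobenius is conjugate to $g$. Commutativity of the natural inflation/restriction square sends $\res_{\langle g\rangle}(\alpha)$ to $\res_v(\xi)=0$ under the inflation $\HH^1(\langle g\rangle, M) \to \HH^1(k_v,M)$, which is injective since $v$ is unramified for $M$.

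Part (2) is immediate from (1). For (3), Poitou--Tate global duality provides a perfect pairing
\[
\Sha^2(k,M) \times \Sha^1(k,M^\vee) \to \Q/\Z,
\]
so the vanishing of $\Sha^2(k,M)$ reduces to that of $\Sha^1(k,M^\vee)$, which follows from (2) applied to $M^\vee$. (The implicit assumption is that $M^\vee$ is split by $K/k$, so that $\Gal(k(M^\vee)/k)=G$; this holds in the applications of Theorem~\ref{thm:5}, where $\mu \subset k(M)$.)

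The main technical point will be the injectivity of the inflation $\HH^1(\langle g\rangle,M) \hookrightarrow \HH^1(k_v,M)$ at the chosen unramified places, which is what enables cyclic-subgroup cohomology of $G$ to faithfully detect local triviality. This will be a short computation with cyclic group cohomology using that $M$ is unramified at $v$; the remainder of the argument is a careful packaging of standard inflation--restriction, Chebotarev density, and Poitou--Tate duality.
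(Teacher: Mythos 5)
Your proof is correct and follows essentially the same route as the paper: parts (1)--(2) go through the inflation--restriction sequence over $K = k(M)$ together with two applications of Chebotarev (once to kill the image in $\Hom(\Gal_K,M)$, once to see that decomposition groups realize every cyclic subgroup of $G$ up to conjugacy), and part (3) reduces to (2) applied to $M^\vee$ via Tate/Poitou--Tate global duality. The only differences are cosmetic: the paper packages the argument as a commutative diagram while you unwind the chase, and you explicitly flag the tacit assumption that $K$ also splits $M^\vee$ (your parenthetical ``injective since $v$ is unramified'' is slightly misplaced---inflation in degree $1$ is always injective, and unramifiedness is what identifies $G_v$ with $\langle g\rangle$---but this does not affect the argument).
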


		\begin{proof}
			(1) $\Rightarrow$ (2) because the inflation map is injective and (2) $\Rightarrow$ (3) by Tate's global duality theorem. We prove (1) using Chebotarev's density theorem as follows.

			Let $K = k(M)$ and for each place $v$ of $k$, choose a place $\frak{v}$ of $K$ above $v$ and let $G_{\frak{v}} = \Gal(K_{\frak{v}}/k_v)$ be the decomposition group. The inflation-restriction sequence gives the following commutative and exact diagram.
			\[
				\xymatrix{
					0 \ar[r]& \HH^1(G,M) \ar[r]^\inf\ar[d]^a & \HH^1(k,M) \ar[r]^\res \ar[d]^b & \HH^1(K,M) \ar[d]^c\\
					0 \ar[r]& \prod_{v}\HH^1(G_v,M) \ar[r] & \prod_v\HH^1(k_v,M) \ar[r] & \prod_v \HH^1(K_{\frak{v}},M) 
				}
			\]
			Since $M$ splits over $K$, we have $\HH^1(K,M) = \Hom_{cont}(\Gal_K,M)$. The map $c$ is therefore injective by Chebotarev's density theorem. Hence $\ker(b) = \inf(\ker(a))$. By a second application of Chebotarev's density theorem, the groups $G_v$ range (up to conjugacy) over all cyclic subgroups of $G$. From this it follows that $\ker(a) \subset \HH^1_*(G,M)$.
		\end{proof}
		
		Recall that $\calJ[m] := (\Pic(C_\kbar)/\frak{\langle m\rangle})[m]$. Since $\calJ[m]^\vee = A_\frak{m}[\varphi]$ (see \cite[]{CreutzBF}) it suffices to prove, under the hypothesis of Theorem \ref{thm:5}, that $\HH^1_*(\Gal(k(\calJ[m])/k),\calJ[m]) = 0$.

\subsection{Proof of Theorem~\ref{thm:5} case (1)}

	By assumption, the complete linear system associated to $\frak{m}$ gives a double cover of $\pi:C \to \PP^1$ which is not ramified at $\frak{m}$. Changing coordinates if necessary, we may arrange that $\frak{m}$ is the divisor above $\infty \in \PP^1$. Then $C$ is the hyperelliptic curve given by $z^2 = f(x,y)$, where $f(x,y)$ is a binary form of degree $n:=2g+2$ with nonzero discriminant. The ramification points of $\pi$ form a finite \'etale subscheme $\Delta\subset C$ of size $n$ which may be identified with the set of roots of $f(x,1)$.
	
	As described in \cite[Section 5]{PoonenSchaefer} (see also \cite[Proposition 22]{BGW}), we may identify $A_\frak{m}[\varphi] = J_\frak{m}[2] \simeq \Res^1_{\Delta}\mu_2$ with the subsets of $\Delta$ of even parity, while $\calJ[2] \simeq \Res_{\Delta}\mu_2/\mu_2$ corresponds to subsets modulo complements and $J[2] \simeq \Res^1_{\Delta}\mu_2/\mu_2$ corresponds to even subsets modulo complements. Parity of intersection defines a Galois equivariant and nondegenerate pairing,
	\[
		e:J_\frak{m}[2] \times \calJ[2] \to \Z/2\Z\,.
	\]
	(See \cite[Section 6]{PoonenSchaefer} or \cite{CreutzBF}). The induced pairing on $J[2]\times J[2]$ is the Weil pairing (written additively). Fixing an identification of the roots of $f(x,1)$ with the set $\{1,\dots,n\}$, the action of $\Gal_k$ on $\calJ[2]$ factors through the symmetric group $S_n$. The following lemma proves Theorem~\ref{thm:5}(1).

		\begin{Lemma}\label{lem:SncalJ2}
			$\HH^1_*(S_n,\calJ[2]) = 0$
		\end{Lemma}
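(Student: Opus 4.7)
The plan is to work directly with a $1$-cocycle $c : S_n \to \calJ[2]$ representing a class in $\HH^1_*(S_n, \calJ[2])$ and show that it is a coboundary. I would use the explicit presentation $\calJ[2] = \F_2^n/\langle (1,\dots,1)\rangle$ with $S_n$ permuting the standard basis $e_1,\dots,e_n$, writing $[v]$ for the class of $v\in \F_2^n$.

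First I would analyse $c$ on transpositions. For $\tau = (ij)$, a direct computation gives $(\tau-1)\calJ[2] = \F_2\cdot [e_i+e_j]$, which is one-dimensional since $n\ge 3$. The hypothesis that $c|_{\langle\tau\rangle}$ is a coboundary forces $c(\tau)\in (\tau-1)\calJ[2]$, so there is a well-defined symmetric function $\epsilon : \binom{[n]}{2}\to \F_2$ with $c((ij)) = \epsilon(i,j)[e_i+e_j]$. Applying the cocycle relation to the factorisation $(ik)=(ij)(jk)(ij)$ and simplifying using $[e_i+e_j]+[e_j+e_k]=[e_i+e_k]$ in $\calJ[2]$ produces the triangle identity
\[
\epsilon(i,j)+\epsilon(j,k)+\epsilon(i,k) = 0
\]
for every triple of distinct indices $i,j,k$.

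The triangle identity is exactly the $1$-cocycle condition for $\epsilon$ viewed as an $\F_2$-valued cochain on the complete graph $K_n$. Since $K_n$ is connected this cocycle is a coboundary: put $f(1):=0$ and $f(i):=\epsilon(1,i)$ for $i\neq 1$, giving $\epsilon(i,j)=f(i)+f(j)$. Set $v:=\sum_{i=1}^{n} f(i)[e_i]\in \calJ[2]$; then $(\tau-1)v = (f(i)+f(j))[e_i+e_j] = c(\tau)$ for every transposition $\tau=(ij)$, so $c$ and the coboundary $dv$ agree on all transpositions.

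To finish, the cocycle $c-dv$ vanishes on every transposition. The cocycle identity $h(\tau_1\cdots\tau_m) = h(\tau_1)+\tau_1\, h(\tau_2\cdots\tau_m)$ and induction on $m$ show that a $1$-cocycle vanishing on a generating set is identically zero; since transpositions generate $S_n$, we conclude $c=dv$ is a coboundary. The heart of the argument is the observation that $(\tau-1)\calJ[2]$ is one-dimensional for each transposition $\tau$, which provides just enough rigidity on $c|_{\text{transpositions}}$ to let one read off the candidate coboundary directly; the only real calculation is the triangle identity in the second paragraph.
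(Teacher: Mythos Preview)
Your argument is correct. The strategy coincides with the paper's: show that the cocycle is a coboundary on a generating set of transpositions by observing that $(\tau-1)\calJ[2]$ is one-dimensional for each transposition $\tau$, then produce a single element of $\calJ[2]$ whose coboundary matches the cocycle on all of them.

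The implementation differs slightly. The paper works only with the \emph{adjacent} transpositions $\tau_t=(t,t+1)$ and uses the nondegenerate pairing $e:J_\frak{m}[2]\times\calJ[2]\to\Z/2\Z$ together with the identity $\tau_t(Q)+Q=e(P_t,Q)\tilde P_t$; since $P_1,\dots,P_{n-1}$ form a basis of $J_\frak{m}[2]$, nondegeneracy immediately yields a common witness $Q$. You instead use \emph{all} transpositions, which produces redundant data subject to the triangle identity $\epsilon(i,j)+\epsilon(j,k)+\epsilon(i,k)=0$, and then solve this graph-cocycle condition directly. Your route is slightly more elementary in that it never invokes the dual module $J_\frak{m}[2]$ or the pairing $e$; the paper's route is slicker in that no compatibility condition needs to be verified, the $n-1$ scalars being automatically independent.
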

	
		\begin{proof}
			For $t=1,\dots,n-1$, let $\tau_t$ denote the transposition $\tau_t := (t,t+1) \in S_n$ and let $P_t := \{t,t+1\} \in J_\fm[2]$ (recall $J_\fm[2]$ is identified with the even subsets of $\{1,\dots,n\}$). We use $\tilde{P}_t$ to denote the image of $P_t$ in $J[2] \subset \calJ[2]$. We note that for any $Q \in \calJ[2]$,
\[
	\tau_{t}(Q) + Q = e(P_t,Q)\tilde{P}_t.
\]
This is because $\tau_t$ is a transposition, addition is given by the symmetric difference, and the pairing $e$ is given by parity of intersection.

			Now suppose $\xi$ is a $1$-cocycle in $Z^1(S_n,\calJ[2])$ which represents a class in $\HH^1_*(S_n,\calJ[2])$. By our assumption, the restriction of $\xi$ to the subgroup $\langle \tau_t \rangle$ is a coboundary. Hence there is some $Q_t \in \calJ[2]$ such that $\xi_{\tau_t} = \tau_{t}(Q_t) + Q_t = e(P_t,Q)\tilde{P}_t\,.$ Since $P_1,\dots,P_{n-1}$ form a basis for $J_\fm[2]$ and $e$ is nondegenerate, we can find $Q \in \calJ[2]$ such that $e(P_t,Q) = e(P_t,Q_t)$ for all $t$. From this it follows that $\xi_{\tau_t} = \tau_t(Q) + Q$, for all $t$. In other words, $Q$ simultaneously plays witness to the fact that $\xi$ is a coboundary on each of the subgroups $\langle \tau_t \rangle$. But then $\xi$ must be a coboundary, since the $\tau_t$ generate $S_n$.
		\end{proof}

		\subsection{A lemma}
		
			Identifying $J[m]$ with $\Pic^0(C_\kbar)[m]$ gives an exact sequence,
		\[
			0 \to J[m] \stackrel{\iota}\To \calJ[m] \stackrel{\frac{1}{\ell} \deg}\To \Z/m\Z \to 0\,,
		\]
		where the integer $\ell$ is $\deg(\frak{m})/m$.
	
			\begin{Lemma}
				\label{lem:H1Ga}
				Let $G = \Gal(k(J[m])/k)$ and $G' = \Gal(k(\calJ[m])/k)$. The map $\iota_*\circ \inf : \HH^1(G,J[m]) \to \HH^1(G',\calJ[m])$ induces a surjection
				\[
					\ker\left( \HH^1(G,J[m]) \to \bigoplus_{g \in G'} \HH^1(\langle g\rangle, \calJ[m]) \right) \To \HH^1_*(G',\calJ[m])\,.
				\]
			\end{Lemma}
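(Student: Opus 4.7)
The plan is to show directly that every class $\eta \in \HH^1_*(G',\calJ[m])$ lies in the image of $\iota_* \circ \inf$; the kernel displayed in the statement is simply the preimage of $\HH^1_*(G',\calJ[m])$ under $\iota_* \circ \inf$, so this is equivalent to the surjectivity claim. First I would verify that $\HH^1_*(G',\Z/m\Z) = 0$: since the $G'$-action on $\Z/m\Z$ is trivial, $\HH^1(G',\Z/m\Z) = \Hom(G',\Z/m\Z)$, and a homomorphism vanishing on every cyclic subgroup vanishes on every element. Feeding this into the long exact sequence of $0 \to J[m] \to \calJ[m] \to \Z/m\Z \to 0$, the image of $\eta$ in $\HH^1(G',\Z/m\Z)$ automatically lies in $\HH^1_*(G',\Z/m\Z) = 0$, so $\eta = \iota_*(\beta)$ for some $\beta \in \HH^1(G',J[m])$.

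Let $H := \ker(G' \twoheadrightarrow G) = \Gal(k(\calJ[m])/k(J[m]))$, so that $H$ acts trivially on $J[m]$. By inflation--restriction, $\beta$ lies in the image of $\inf\colon \HH^1(G,J[m]) \to \HH^1(G',J[m])$ exactly when $\res_H(\beta) = 0$ in $\HH^1(H,J[m]) = \Hom(H,J[m])$. The lift $\beta$ is only well-defined modulo the image of the connecting map $\delta\colon \Z/m\Z \to \HH^1(G',J[m])$. Choosing an additive set-theoretic section $s\colon \Z/m\Z \to \calJ[m]$, the class $\delta(1)$ is represented by the cocycle $\chi(g) := g\cdot s(1) - s(1) \in J[m]$, and its restriction $\psi := \chi|_H$ is a homomorphism $H \to J[m]$ satisfying $\res_H(\delta(c)) = c\psi$. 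The task therefore reduces to producing a single scalar $c \in \Z/m\Z$ with $\res_H(\beta) = c\psi$.

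Since $\eta$ restricts trivially to every cyclic subgroup of $G'$, the long exact sequence for $\langle h \rangle$ with $h \in H$ shows that $\beta(h) = c_h\psi(h)$ in $J[m]$ for some $c_h \in \Z/m\Z$, one for each $h$. The homomorphism $\res_H(\beta)\colon H \to J[m]$ therefore kills $\ker\psi$ and factors through $\psi(H)$, inducing an endomorphism $\bar\beta$ of the finite abelian group $\psi(H)$ with $\bar\beta(v) \in \langle v \rangle$ for every $v$. The crux is then the elementary module-theoretic fact that any such endomorphism is multiplication by a single scalar: decomposing $\psi(H)$ into primary cyclic summands yields a constant $c_i$ on each summand, and evaluating $\bar\beta$ at sums $v_i + v_j$ of generators of different summands forces these constants to agree modulo the smaller order, so a uniform $c$ exists by the Chinese Remainder Theorem. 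Once $\res_H(\beta) = c\psi$, the class $\beta - \delta(c)$ has trivial restriction to $H$, hence inflates from some $\alpha \in \HH^1(G,J[m])$, and $\iota_* \inf(\alpha) = \iota_*(\beta) = \eta$.

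I expect the principal obstacle to be this last module-theoretic step: upgrading the pointwise condition $\beta(h) \in \langle \psi(h)\rangle$ to global proportionality $\res_H(\beta) = c\psi$. This is delicate when $m$ is composite because different elements of $\psi(H)$ have different annihilators, so one must verify compatibility across the primary decomposition. The other ingredients --- the long exact sequence argument bounding the indeterminacy of $\beta$, the vanishing of $\HH^1_*(G',\Z/m\Z)$, and the inflation--restriction description of descent from $G'$ to $G$ --- are routine diagram chases.
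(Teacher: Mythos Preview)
Your proof is correct and follows essentially the same route as the paper: lift via the long exact sequence using $\HH^1_*(G',\Z/m\Z)=0$, then use inflation--restriction to reduce to showing $\res_H(\beta)$ is a multiple of $\psi$, and finish with the elementary fact that an endomorphism of a finite abelian group sending every element into its own cyclic span is a scalar. The only cosmetic difference is that the paper first proves $\psi$ (their $i$) is injective and phrases the last step as an identity in $\End_G(N)$, whereas you bypass injectivity by observing directly that $\res_H(\beta)$ factors through $\psi(H)$; neither the injectivity nor the $G$-equivariance is actually needed for the scalar-endomorphism conclusion, so your streamlining is harmless.
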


			\begin{proof}
				Cohomology of $G'$-modules gives an exact sequence
				\[
					\Z/m\Z \stackrel{\delta}\to \HH^1(G',J[m]) \to \HH^1(G',\calJ[m]) \to \HH^1(G',\Z/m\Z)\,.
				\]
				Since $\HH^1_*(G',\Z/m\Z) = 0$, we see that $\HH^1_*(G',\calJ[m])$ is contained in the image of $\HH^1(G',J[m])$. Hence, we may lift any $x \in \HH^1_*(G',\calJ[m])$ to some $y \in \HH^1(G',J[m])$. Now $G'$ sits in an exact sequence $0 \to N \to G' \to G \to 1$. We must show it is possible to choose $y$ such that $\res_N(y) = 0$, where $\res_N$ is as in the inflation-restriction sequence, 
				\[
					0 \to \HH^1(G,J[m]) \stackrel{\inf_N}\to \HH^1(G',J[m]) \stackrel{\res_N}{\to} \HH^1(N,J[m])^G\,.
				\]

				To that end we will determine $\res_N\circ\delta(1)$. Since $N$ acts trivially on $J[m]$ we have that $\HH^1(N,J[m])^G = \Hom_G(N,J[m])$ (here $G$ acts on the abelian group $N$ by conjugation in $G'$). Let $\epsilon \in \calJ[m]$ be a lift of $1 \in \Z/m\Z$. By definition, $\res_N\circ \delta(1)$ is (represented by) the map $i : N \to J[m]$ given by $i(\sigma) = \sigma(\epsilon)-\epsilon$. It follows from the general theory that $i$ is a morphism of $G$-modules. This is verified by the following computation:
				\begin{align*}
					i(\act{g} \sigma) &= i(\tilde{g}\sigma \tilde{g}^{-1}) & \text{(where $\tilde{g}$ is a lift of $g$ to $G'$)}\\
					&= \tilde{g}\left(\sigma(\tilde{g}^{-1}(\epsilon)) - \tilde{g}^{-1}(\epsilon)\right)\\
					&= \tilde{g}\left( \sigma(\epsilon+a) - (\epsilon + a)\right)  & \quad \text{(where $\tilde{g}^{-1}(\epsilon)-\epsilon = a \in J[m]$)}\\
					&= g\left( \sigma(\epsilon) - \epsilon\right) & \text{(since $N$ acts trivially on $J[m]$)}\\
					&=g(i(\sigma))
				\end{align*}
				We claim moreover that $i$ is injective. Indeed, if $\sigma \in N$ acts trivially on $\epsilon$, then $\sigma$ acts trivially on all of $\calJ[m]$ (because $N$ acts trivially on $J[m]$, and $\calJ[m]$ is generated by $J[m]$ and $\epsilon$).
				
				By assumption on $x$, $\res_g(y)$ lies in the image of $\delta:\Z/m\Z \to \HH^1(\langle g \rangle, J[m])$, for every $g \in G'$. It follows that $\res_N(y)$ lies in the subgroup $\End_G(N) = \Hom_G(N,i(N)) \subset \Hom_G(N,J[m])$ and, moreover, that this endomorphism sends every element to some multiple of itself. Any such endomorphism is a multiple of the identity, in which case $\res_N(y)$ is a multiple of $\res_N\circ \delta(1)$. We may therefore adjust our lift $y$ of $x$ by a multiple $\delta(1)$ to arrange that $\res_N(y) = 0$. This proves the lemma.
			\end{proof}

	\subsection{Proof of Theorem~\ref{thm:5} case (2)}

		Suppose $C$ has genus $g \ge 2$, $m = 2$ and $\frak{m}$ is a reduced and effective canonical divisor. Then $\deg(\frak{m}) = 2g-2$. Since the Weil pairing on $J[2]\times J[2]$ is alternating and Galois equivariant, the action of $\Gal_k$ on $J[2]$ factors through the symplectic group $\Sp(J[2]) \simeq \Sp_{2g}(\F_2)$. By \cite[Proposition 5.4]{BPS} the action of $\Gal_k$ on $\calJ[2] = \Pic(C_\kbar)/\langle \frak{[m]}\rangle$ also factors through $\Sp(J[2])$. We will show that $\HH^1_*(\Sp(J[2]),\calJ[2]) = 0$, after which the theorem follows from Lemma~\ref{lem:vanishSha3}(\ref{lem:vanishSha3}). 
		
		Let $\delta$ be the coboundary in $\Sp(J[2])$-cohomology of the exact sequence $0 \to J[2] \to \calJ[2] \to \Z/2\Z \to 0$. The sequence is not split, so $\delta(1)$ is nonzero. A direct (but rather involved) computation of group cohomology shows that $\HH^1(\Sp(V),V)$ has $\F_2$-dimension $1$ for any symplectic space $V$ of dimension $\ge 4$ over $\F_2$ (See \cite[Theorems 5.2, 4.8 and 4.1]{Pollatsek}). It follows that $\HH^1(\Sp(J[2]),J[2]) = \langle \delta(1) \rangle$. Since the image of $\delta(1)$ in $\HH^1(\Sp(J[2]),\calJ[2])$ is trivial, Lemma~\ref{lem:H1Ga} shows that $\HH^1_*(\Sp(J[2]),\calJ[2]) = 0$.

	\subsection{Proof of Theorem~\ref{thm:5} case (3)}		
		 We may assume $g=1$. Let $G = \Gal(k(J[2])/k)$. By Lemma~\ref{lem:H1Ga} it is enough to show that $\HH^1(G,J[2]) = 0$. Noting that $G \subset S_3$, let $G_0$ be the intersection of $G$ with the unique index $2$ subgroup of $S_3$. Then $G_0$ has odd order, so $\HH^1(G_0,J[2]) = 0$. Also, the order of $G/G_0$ and the characteristic of $J[2]^{G_0}$ both divide $2$, so $\HH^1(G/G_0,J[2]^{G_0})=0$. (This follows easily from the computation of cohomology of cyclic groups, since the conditions imply that the kernel of the norm is equal to the image of the augmentation ideal.) The inflation-restriction sequence then gives that $\HH^1(G,J[2]) = 0$ as desired.

	\subsection{Proof of Theorem~\ref{thm:5} case (4)}
		Suppose $C$ is a genus one curve and $\deg(\frak{m}) = m = p^r$ for some prime $p$ and positive integer $r$. It suffices to show that $\Sha^1(k,\calJ[m]) = 0$ except possibly when one of the following holds.
			\begin{enumerate}
				\item[(a)] The action of $\Gal_k$ on $J[p]$ is reducible, or
				\item[(b)] $r > 1$ and the action of $\Gal_k$ on $J[p]$ factors through the symmetric group $S_3$.
			\end{enumerate}
			So let us assume neither of these conditions holds. For $s \ge 1$ let $G_s = \Gal(k(J[p^s])/k)$. By Lemma~\ref{lem:H1Ga} it suffices to show that $\HH^1(G_r,J[p^r]) = 0$. The case $r = 1$ follows from Lemma~\ref{lem:caser=1} below. For $r > 1$,  \cite[Theorem 1]{CipStix} shows that the hypotheses ensure that the $G_1$-modules $J[p]$ and $\End(J[p])$ have no common irreducible subquotient. In this case the proof is completed by Lemma~\ref{lem:caser>1}.
		
		\begin{Lemma}\label{lem:caser=1}
			If $G_1$ acts on $J[p]$ irreducibly, then $\HH^1(G_1,J[p]) = 0$.
		\end{Lemma}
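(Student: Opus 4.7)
Write $M := J[p]$; this is a $2$-dimensional $\F_p$-vector space on which $G_1$ acts faithfully and, by hypothesis, irreducibly, so we may regard $G_1$ as an irreducible subgroup of $\GL(M) \cong \GL_2(\F_p)$. If $p \nmid |G_1|$ then $\HH^1(G_1, M) = 0$ for elementary reasons, so assume $p \mid |G_1|$ and fix a Sylow $p$-subgroup $P \subseteq G_1$, which has order $p$.

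The first step is to show $G_1 \supseteq \SL_2(\F_p)$. Irreducibility forces $P$ to be non-normal in $G_1$, for otherwise its unique fixed line $L := M^P$ would be a proper $G_1$-stable subspace. Hence $G_1$ contains at least two distinct Sylow $p$-subgroups $P$ and $P'$. Both are generated by unipotent elements, so they lie in $\SL_2(\F_p)$; and a standard computation (e.g.\ via Bruhat decomposition, using that $\SL_2(\F_p)$ acts transitively on unordered pairs of distinct Sylow $p$-subgroups) shows $\langle P, P' \rangle = \SL_2(\F_p)$. Thus $\SL_2(\F_p) \subseteq G_1$.

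Since $\SL_2(\F_p)$ is normal in $\GL_2(\F_p)$ it is normal in $G_1$, and acts irreducibly on $M$. The inflation-restriction sequence
\[
0 \to \HH^1\!\left(G_1/\SL_2(\F_p),\, M^{\SL_2(\F_p)}\right) \to \HH^1(G_1, M) \to \HH^1(\SL_2(\F_p), M)
\]
has vanishing left-hand term (since $M^{\SL_2(\F_p)} = 0$ by irreducibility), reducing the problem to showing $\HH^1(\SL_2(\F_p), M) = 0$. For this, restriction to $P$ embeds $\HH^1(\SL_2(\F_p), M)$ as the $N/P$-invariants of $\HH^1(P, M)$, where $N := N_{\SL_2(\F_p)}(P)$ is the standard Borel. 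A direct calculation with the cyclic group $P$ acting unipotently on $M$ identifies $\HH^1(P, M) \cong M/L$, a one-dimensional $\F_p$-space on which the diagonal torus $N/P \cong \F_p^*$ acts by the character $a \mapsto a^{-3}$, combining the $N$-module action on $M/L$ (by $a^{-1}$) with the conjugation action $\sigma \mapsto \sigma^{a^2}$ on $P$. For $p \geq 3$ this character is non-trivial and the invariants vanish; for $p = 2$ the group $\HH^1(P, M)$ itself is already zero.

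The main technical obstacle is the character computation in the final step, which requires careful bookkeeping of how the $N$-module structure on $M$ and the conjugation action of $N$ on $P$ combine to give the $N/P$-action on $\HH^1(P, M)$. The earlier reduction $\SL_2(\F_p) \subseteq G_1$ uses irreducibility essentially, but is otherwise a standard structural fact about $\GL_2(\F_p)$.
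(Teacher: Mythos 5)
Your proof is correct, but it takes a genuinely different route to both of the key steps. To establish $\SL_2(\F_p)\subseteq G_1$, the paper simply cites Serre's dichotomy (\cite[Proposition 15]{SerreEC}: a subgroup of $\GL_2(\F_p)$ of order divisible by $p$ either contains $\SL_2(\F_p)$ or lies in a Borel), whereas you re-derive the relevant case from scratch by observing that irreducibility forces the Sylow $p$-subgroup to be non-normal and that two distinct Sylow $p$-subgroups of $\GL_2(\F_p)$ generate $\SL_2(\F_p)$; this is a clean, self-contained substitute. The bigger divergence is in the vanishing of $\HH^1(\SL_2(\F_p),J[p])$. The paper exploits the central subgroup $\mu_2=\{\pm I\}$, which for odd $p$ is normal in $G_1$, has order prime to $p$, and has $J[p]^{\mu_2}=0$; then inflation-restriction with respect to $\mu_2$ kills both end terms immediately. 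You instead restrict to a Sylow $p$-subgroup $P$, use injectivity of restriction together with the TI property of $P$ to reduce to the $N/P$-invariants of $\HH^1(P,J[p])\cong J[p]/L$, and compute that the torus acts by the nontrivial character $a\mapsto a^{-3}$. Your character computation is correct (and the $p=2$ fallback $\HH^1(P,J[p])=0$ is also right), but it is noticeably more work than the paper's $\mu_2$ trick, which in one stroke avoids any analysis of the Sylow normalizer. One small economy available to you: you do not actually need the full ``image equals stable elements'' identification $\HH^1(\SL_2,J[p])\cong\HH^1(P,J[p])^{N/P}$; injectivity of restriction plus the elementary fact that the restriction image is always $N/P$-invariant already suffices.
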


		\begin{proof}
			If $p \nmid \#G_1$, then $\HH^1(G_1,J[p])$ is obviously trivial. If $p \mid \#G_1$, then a well known result of Serre \cite[Proposition 15]{SerreEC} implies that either $\SL_2(\F_p) \subset G_1$ or $G_1$ is contained in a Borel subgroup of $\GL_2(\F_p)$. In the later case the action is reducible, so we may assume $\SL_2(\F_p) \subset G_1$. As the case $p = 2$ has already been addressed in the proof of case (3) of the theorem, we may assume $p$ is odd. In this case $G_1$ contains the normal subgroup $\mu_2$ of order prime to $p$ which has no fixed points. The corresponding inflation-restriction sequence,
			\[
				0 \to \HH^1(G/\mu_2,J[p]^{\mu_2}) \to \HH^1(G,J[p]) \to \HH^1(\mu_2,J[p])
			\]
			shows that $\HH^1(G_1,J[p])$ must vanish.
		\end{proof}
		
		\begin{Lemma}\label{lem:caser>1}
			If $\HH^1(G_1,J[p]) = 0$ and the $G_1$-modules $J[p]$ and $\End(J[p])$ have no common irreducible subquotient, then $\HH^1(G_r,J[p^r]) = 0$ for all $r \ge 1$. 
		\end{Lemma}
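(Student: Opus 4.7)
\emph{Proof plan.} I would argue by induction on $r$, the base case $r=1$ being the hypothesis of the lemma. For the inductive step, the short exact sequence of $G_r$-modules
\[
0 \to J[p] \to J[p^r] \xrightarrow{p} J[p^{r-1}] \to 0
\]
yields a long exact sequence in cohomology which reduces the problem to proving that both $\HH^1(G_r,J[p])$ and $\HH^1(G_r,J[p^{r-1}])$ vanish. Each of these will be handled by an inflation-restriction argument with respect to a normal subgroup of $G_r$ that acts trivially on the coefficient module.

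The key input is the description of the kernel of the reduction map $\GL_2(\Z/p^s) \to \GL_2(\Z/p^{s-1})$: its elements are matrices $I+p^{s-1}M$ with $M \in M_2(\F_p)$, it is abelian, and $I+p^{s-1}M \leftrightarrow M$ identifies it with $\End(J[p])$ as a $G_1$-module, both sides carrying the conjugation action. Hence $H := \ker(G_r \to G_{r-1})$ is a $G_{r-1}$-submodule of $\End(J[p])$ (with $G_{r-1}$ acting through $G_{r-1} \to G_1$), and $K := \ker(G_r \to G_1)$ has a normal filtration by the subgroups $\ker(G_r \to G_i)$ whose successive quotients are $G_1$-submodules of $\End(J[p])$. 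In particular every irreducible $G_1$-subquotient of $K^{\mathrm{ab}}$ is an irreducible $G_1$-subquotient of $\End(J[p])$.

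For $\HH^1(G_r,J[p^{r-1}])$, the inflation-restriction sequence attached to $H \lhd G_r$ reads
\[
0 \to \HH^1(G_{r-1},J[p^{r-1}]) \to \HH^1(G_r,J[p^{r-1}]) \to \Hom_{G_{r-1}}(H, J[p^{r-1}])\,,
\]
whose inflation term vanishes by the inductive hypothesis. Since $J[p^{r-1}]$ admits a $G_{r-1}$-stable filtration (by $J[p^i]$ for $i=0,\dots,r-1$) with all subquotients isomorphic to $J[p]$, its irreducible $G_{r-1}$-subquotients are precisely the irreducible $G_1$-subquotients of $J[p]$. The hypothesis that $J[p]$ and $\End(J[p])$ share no irreducible $G_1$-subquotient, combined with the elementary fact that $\Hom_G(M,N)=0$ whenever $M$ and $N$ have no common irreducible subquotient, forces the restriction term to vanish. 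An entirely analogous application of inflation-restriction to $K \lhd G_r$, using $\HH^1(G_1,J[p])=0$ for the inflation term and the filtration of $K^{\mathrm{ab}}$ for the restriction term, gives $\HH^1(G_r,J[p])=0$.

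The main technical obstacle I foresee is verifying carefully that the conjugation action of $G_{r-1}$ on $H$ (respectively $G_1$ on the successive quotients of $K$) coincides with the standard action on $\End(J[p])$ under the identification above; this is routine but requires tracking actions through the reduction maps and justifying that the filtration of $K$ survives passage to $K^{\mathrm{ab}}$ with the same $G_1$-subquotients. Once these identifications are in place, both cohomology vanishings reduce to the same purely algebraic observation about subquotients, and the induction closes.
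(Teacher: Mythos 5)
Your proposal is correct and uses essentially the same approach as the paper: inflation–restriction with respect to $\ker(G_r \to G_1)$ (identified, via its filtration, with $G_1$-submodules of $\End(J[p])$) to show $\HH^1(G_r,J[p])=0$, together with the short exact sequences relating the $J[p^s]$ to climb up to $J[p^r]$. The only difference is organizational — the paper fixes $r$ and inducts on $s$ using $0 \to J[p^s] \to J[p^{s+1}] \to J[p] \to 0$, whereas you induct on $r$ and dispose of $\HH^1(G_r,J[p^{r-1}])$ via a second inflation–restriction relative to $\ker(G_r\to G_{r-1})$ — but the key ingredients are identical.
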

		
		\begin{proof}
			We will prove below that the hypothesis of the lemma implies that $\HH^1(G_r,J[p]) = 0$. Assuming this, induction on $s$ and the exact sequence $0 \to J[p^s] \to J[p^{s+1}] \to J[p] \to 0$ prove that $\HH^1(G_r,J[p^s]) = 0$ for every $1 \le s \le r$.
			
			As we have assumed $\HH^1(G_1,J[p]) = 0$, the inflation-restriction sequence gives an injective map $\HH^1(G_r,J[p]) \hookrightarrow \Hom_{G_1}(H_r,J[p])\,,$ where $H_r = \Gal(k(J[p^r])/k(J[p]))$ is the kernel of $G_r \to G_1$. The $G_1$-module $H_r$ admits a filtration $0 \subset H_1 \subset \dots \subset H_r$ whose successive quotients are $G_1$-submodules of $\ker\left(\GL(J[p^s]) \to \GL(J[p^{s-1}])\right) \cong \End(J[p])$. So the hypothesis of the lemma implies that $\Hom_{G_1}(H_r,J[p])=0$.
		\end{proof}

\section{Further Remarks}\label{sec:remarks}

 	The coboundary of $1$ under the exact sequence of $\Gal_k$-modules
 	\begin{equation}\label{eq:exact}
 		0 \to J[m] \to \calJ[m] \to \Z/m\Z \to 0
 	\end{equation}
 	gives a class in $\HH^1(k,J[m])$ which we denote by $[J^\ell_m]$. If either
 		 \begin{enumerate}
 		\item[(i)] $m = \deg(\frak{m}) = h^0([\frak{m}]) = 2$ and $g$ is even (i.e. $C$ is a hyperelliptic curve of even genus), or
 		\item[(ii)] $m = 2$ and $\frak{m}$ is a canonical divisor,
 	\end{enumerate}
 	then $[J^\ell_m] \in \HH^1(k,J[2])$ is the class of the theta characteristic torsor (cf. \cite[Definition 3.15]{PoonenRains}).

	\begin{Lemma}\label{lem:more}
		 The inclusion $J[m] \hookrightarrow \calJ[m]$ induces a map $\HH^1(k,J[m]) \to \HH^1(k,\calJ[m])$ for which the following hold.
		\begin{enumerate}
			\item $\Sha^1(k,\calJ[m])$ is contained in the image of $\HH^1(k,J[m])$.
			\item If $\Sha^1(k,\calJ[m])=0$, then $\Sha^1(k,J[m]) \subset \langle [J_m^\ell]\rangle$.
			\item If $[J^\ell_m] \in \Sha^1(k,J[m])$, then there is an exact sequence
				\[
					0 \to \langle [J^\ell_m] \rangle \subset \Sha^1(k,J[m]) \to \Sha^1(k,\calJ[m]) \to 0\,.
				\]
		\end{enumerate}
	\end{Lemma}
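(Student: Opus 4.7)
The plan is to derive all three parts from the long exact sequence in Galois cohomology attached to~\eqref{eq:exact}:
\[
\Z/m\Z \stackrel{\delta}{\To} \HH^1(k,J[m]) \To \HH^1(k,\calJ[m]) \To \HH^1(k,\Z/m\Z),
\]
in which $\delta(1) = [J^\ell_m]$ by the definition of the latter class, so that the kernel of the middle map is precisely $\langle [J^\ell_m]\rangle$. The only arithmetic input I need is $\Sha^1(k,\Z/m\Z) = 0$, which is immediate from Chebotarev's density theorem applied to the continuous characters $\Gal_k \to \Z/m\Z$ representing classes in $\HH^1(k,\Z/m\Z)$.

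For part (1), I would take $x \in \Sha^1(k,\calJ[m])$ and observe that its image in $\HH^1(k,\Z/m\Z)$ lies in $\Sha^1(k,\Z/m\Z) = 0$, so by exactness $x$ lifts to $\HH^1(k,J[m])$. For part (2), assuming $\Sha^1(k,\calJ[m]) = 0$, any class in $\Sha^1(k,J[m])$ maps to zero in $\HH^1(k,\calJ[m])$ by functoriality of restriction, hence lies in the kernel $\langle [J^\ell_m]\rangle$.

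Part (3) will require slightly more care. Assuming $[J^\ell_m] \in \Sha^1(k,J[m])$, I would first note that the restriction of $\HH^1(k,J[m]) \to \HH^1(k,\calJ[m])$ to $\Sha^1(k,J[m])$ lands in $\Sha^1(k,\calJ[m])$ with kernel $\langle [J^\ell_m]\rangle \cap \Sha^1(k,J[m]) = \langle [J^\ell_m]\rangle$, using the hypothesis to get the last equality. For surjectivity, given $x \in \Sha^1(k,\calJ[m])$, part (1) supplies a lift $w \in \HH^1(k,J[m])$; for each place $v$, the image of $\res_v(w)$ in $\HH^1(k_v,\calJ[m])$ equals $\res_v(x) = 0$, so $\res_v(w) \in \langle \res_v[J^\ell_m]\rangle$, and this vanishes by the hypothesis on $[J^\ell_m]$, forcing $w \in \Sha^1(k,J[m])$.

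The argument is essentially formal diagram-chasing, and I do not anticipate any serious obstacle: the only substantive ingredient invoked is the classical vanishing $\Sha^1(k,\Z/m\Z) = 0$, and everything else follows from the long exact sequence together with the identification $\delta(1) = [J^\ell_m]$.
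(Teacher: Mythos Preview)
Your proposal is correct and follows exactly the approach the paper indicates: take the long exact sequence in Galois cohomology of~\eqref{eq:exact}, identify the kernel of $\HH^1(k,J[m])\to\HH^1(k,\calJ[m])$ with $\langle[J^\ell_m]\rangle$, and use $\Sha^1(k,\Z/m\Z)=0$. The paper's proof is a one-line summary of precisely this diagram chase; you have simply written out the details.
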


		\begin{proof}
			Take Galois cohomology of~\eqref{eq:exact} and use the fact that $\Sha^1(k,\Z/m\Z) = 0$. 
		\end{proof}
		
		\begin{Example}\label{ex:more}
			Suppose $C : y^2 = x^6+x+6$. Theorem~\ref{thm:5}, Lemma~\ref{lem:more} and \cite[Example 3.20b]{PoonenRains} together show that $\Sha^1(k,J[2]) = \langle [J^1_2] \rangle \simeq \Z/2\Z$,  while $\Sha^1(k,\calJ[2]) = 0$.
		\end{Example}
		
		In a sense this is the generic situation. Specifically one has the following.
		
		\begin{Proposition}
			Suppose that $m = 2$ and either
			\begin{enumerate}
				\item[(i)] $\deg(\frak{m}) = h^0([\frak{m}]) = 2$ and $\Gal(k(J[2])/k) \simeq S_{2g+2}$, or
			\item[(ii)] $\frak{m}$ is a canonical divisor and $\Gal(k(J[2])/k) \simeq \Sp_{2g}(\F_2)$.
			\end{enumerate}
			
			Let $S$ be any finite set of primes of $k$ containing all all archimedean primes, all primes above $2$,\  and all primes where $[J^\ell_2]$ is ramified. Then
			\[
				\Sha^1(k,S,J[2]) = \langle [J^\ell_2] \rangle \quad \text{and} \quad \Sha^1(k,S,\calJ[2]) = 0\,,
			\]
			where $\Sha^1(k,S,M) \subset \HH^1(k,M)$ denotes the subgroup that is locally trivial outside $S$.
		\end{Proposition}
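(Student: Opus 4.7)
The plan is to bootstrap from Theorem~\ref{thm:5} by upgrading its vanishing to the $S$-setting. First, the proof of Lemma~\ref{lem:vanishSha}(1) extends verbatim to give $\Sha^1(k,S,M) \subseteq \inf(\HH^1_*(G,M))$ for any finite set of places $S$, where $G := \Gal(k(M)/k)$: both invocations of Chebotarev's density theorem in that argument (the injectivity of $\HH^1(K,M) \to \prod_{v \notin S}\HH^1(K_{\frak{v}},M)$, and the identification of the decomposition groups at unramified places $v \notin S$ with all cyclic subgroups of $G$ up to conjugacy) remain valid when the product is restricted to $v \notin S$.

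Applying this with $M = \calJ[2]$, and invoking Lemma~\ref{lem:SncalJ2} in case (i), or the analogous vanishing $\HH^1_*(\Sp(J[2]),\calJ[2]) = 0$ established in the proof of Theorem~\ref{thm:5}(2) in case (ii), one deduces $\Sha^1(k,S,\calJ[2]) = 0$. The exact sequence~\eqref{eq:exact} has coboundary $\delta(1) = [J^\ell_2]$, so the kernel of $\HH^1(k,J[2]) \to \HH^1(k,\calJ[2])$ equals $\langle [J^\ell_2]\rangle$; since any class in $\Sha^1(k,S,J[2])$ maps into $\Sha^1(k,S,\calJ[2]) = 0$, this yields the containment $\Sha^1(k,S,J[2]) \subseteq \langle [J^\ell_2]\rangle$.

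For the reverse containment it remains to verify that $[J^\ell_2] \in \Sha^1(k,S,J[2])$. By the choice of $S$, the class $[J^\ell_2]$ is unramified at each $v \notin S$; at such $v$ (non-archimedean, with $v \nmid 2$) local triviality of $[J^\ell_2]_v$ is equivalent to surjectivity of $\calJ[2](k_v) \to \Z/2\Z$, i.e., to the existence of a $k_v$-rational degree-one lift. In case (ii) such a lift is supplied by a $k_v$-rational theta characteristic; in case (i) it is supplied by an odd-parity Frobenius-stable subset of the branch locus, whose existence at unramified places follows from the combinatorial structure of $\calJ[2]$ together with the hypothesis $\Gal(k(J[2])/k) \simeq S_{2g+2}$. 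This local verification is the main technical obstacle; once it is in hand, the two containments combine to give the asserted equalities.
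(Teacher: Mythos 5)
Your overall structure matches the paper's: extend Lemma~\ref{lem:vanishSha}(1)--(2) to the $S$-version (your Chebotarev observation is correct, since excluding the finite set $S$ does not affect either density argument), apply it with $M=\calJ[2]$ together with Lemma~\ref{lem:SncalJ2} or its symplectic analogue to get $\Sha^1(k,S,\calJ[2])=0$, and then use the long exact sequence of~\eqref{eq:exact} (i.e.\ Lemma~\ref{lem:more} in the $S$-setting) to conclude $\Sha^1(k,S,J[2])\subseteq\langle[J^\ell_2]\rangle$. These reductions are sound.

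The genuine gap is exactly where you flag ``the main technical obstacle'': showing $[J^\ell_2]\in\Sha^1(k,S,J[2])$. Being unramified at $v\notin S$ is not the same as being locally trivial there, and you do not actually prove local triviality. Your case~(ii) claim that ``such a lift is supplied by a $k_v$-rational theta characteristic'' simply asserts what needs to be proved -- you must show a rational theta characteristic exists over $k_v$, and that is a nontrivial fact. Your case~(i) sketch about ``odd-parity Frobenius-stable subsets'' cannot work as stated: if $2g+2\equiv 0\pmod 4$, a Frobenius of cycle type $(2,2,\dots,2)$ yields no odd subset $T$ with $\sigma(T)\in\{T,T^c\}$ (every $\sigma$-stable set has even size, and any $T$ with $\sigma(T)=T^c$ also has even size $n/2$), so the coboundary is locally nontrivial. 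What saves the situation -- and what you miss -- is the observation, made in Section~\ref{sec:remarks} and at the start of the paper's proof, that the hypothesis $\Gal(k(J[2])/k)\simeq S_{2g+2}$ in case~(i) forces $g$ to be even, so that in both cases $[J^\ell_2]$ is the theta characteristic torsor. With that identification in hand, the paper then invokes \cite[Proposition~3.12]{PoonenRains} to conclude $[J^\ell_2]\in\Sha^1(k,S,J[2])$, which is precisely the external input your argument lacks.
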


		\begin{proof}
			We first note that the assumptions in case (i) require that the genus be even, so that in both cases $[J_2^\ell]$ is the theta characteristic torsor. It follows from \cite[Proposition 3.12]{PoonenRains} that $[J_2^\ell] \in \Sha^1(k,S,J[2])$. On the other hand, Lemma~\ref{lem:vanishSha} parts (1) and (2) and Lemma~\ref{lem:more} all remain valid if we replace $\Sha^1(k,M)$ with $\Sha^1(k,S,M)$.
		\end{proof}

%%%%%%%%%%%%%%%%%%%%%%%%%%%%
%%%%%%%%%%%%%%%%%%%%%%%%%%%%
%% BIBLIOGRAPHY %%%%%%%%%%%%%%%%%
%%%%%%%%%%%%%%%%%%%%%%%%%%%%
%%%%%%%%%%%%%%%%%%%%%%%%%%%%

	\begin{bibdiv}
		\begin{biblist}

%\bib{BG}{article}
%{
%  title={The average size of the 2-Selmer group of Jacobians of hyperelliptic curves having a rational Weierstrass point},
%  author={Bhargava, Manjul},
%   author={Gross, Benedict},
%   eprint={arXiv:1208.1007}
%   
%   (with M. Bhargava) The average size of the 2-Selmer group of the Jacobians of hyperelliptic curves with a rational Weierstrass point. In: Automorphic Representations and L-functions, TIFR Studies in Math 22 (2013), 23--91.
%  }

\bib{BCF}{article}{
  author={Bhargava, Manjul},
  author={Cremona, John},
  author={Fisher, Tom},
  title = {The proportion of plane cubic curves over Q that everywhere locally have a point},
  journal = {Int. J. Number Theory},
  volume = {12},
  issue = {04},
  date = {2016},
  %doi={10.1142/S1793042116500664}
}

\bib{BCF2}{article}{
  author={Bhargava, Manjul},
  author={Cremona, John},
  author={Fisher, Tom},	
  title = {What is the probability that a random integral quadratic form in n variables has an integral zero?},
  journal = {Int. Math. Res. Not. IMRN},
  date = {2015},
  doi = {10.1093/imrn/rnv251}
}
 
\bib{BG}{article}{
   author={Bhargava, Manjul},
   author={Gross, Benedict H.},
   title={The average size of the 2-Selmer group of Jacobians of
   hyperelliptic curves having a rational Weierstrass point},
   conference={
      title={Automorphic representations and $L$-functions},
   },
   book={
      series={Tata Inst. Fundam. Res. Stud. Math.},
      volume={22},
      publisher={Tata Inst. Fund. Res., Mumbai},
   },
   date={2013},
   pages={23--91},
  %review={\MR{3156850}},
} 
   
\bib{BGW}{article}{
   author={Bhargava, Manjul},
   author={Gross, Benedict},
   author={Wang, Xiaoheng},
   title={A positive proportion of locally soluble hyperelliptic curves over $\Q$ have no point over any odd degree extension},
   journal={J. Amer. Math. Soc.},
   date={2016}
   doi={10.1090/jams/863}
   %eprint={ arXiv:1310.7692}
}

\bib{BGW_AIT2}{article}{
   author={Bhargava, Manjul},
   author={Gross, Benedict},
   author={Wang, Xiaoheng},
   title={Arithmetic invariant theory II: Pure inner forms and obstructions to the existence of orbits},
   note={to appear in Progress in Mathematics, Representations of Lie Groups: In Honor of David A Vogan, Jr. on his 60th Birthday.}
   %date={2013},
   eprint={arXiv:1310.7689}
}

\bib{BPS}{article}{
   author={Bruin, Nils},
   author={Poonen, Bjorn},
   author={Stoll, Michael},
   title={Generalized explicit descent and its application to curves of
   genus 3},
   journal={Forum Math. Sigma},
   volume={4},
   date={2016},
   pages={e6, 80},
   issn={2050-5094},
   %review={\MR{3482281}},
   %doi={10.1017/fms.2016.1},
}

\bib{BruinStoll}{article}{
   author={Bruin, Nils},
   author={Stoll, Michael},
   title={Two-cover descent on hyperelliptic curves},
   journal={Math. Comp.},
   volume={78},
   date={2009},
   number={268},
   pages={2347--2370},
   issn={0025-5718},
 %  review={\MR{2521292 (2010e:11059)}},
%   doi={10.1090/S0025-5718-09-02255-8},
}

\bib{Cassels}{article}{
   author={Cassels, J. W. S.},
   title={Arithmetic on curves of genus $1$. IV. Proof of the
   Hauptvermutung},
   journal={J. Reine Angew. Math.},
   volume={211},
   date={1962},
   pages={95--112},
   issn={0075-4102},
   %review={\MR{0163915 (29 \#1214)}},
}

\bib{CipStix}{article}{
   author={{\c{C}}iperiani, Mirela},
   author={Stix, Jakob},
   title={Weil-Ch\^atelet divisible elements in Tate-Shafarevich groups II:
   On a question of Cassels},
   journal={J. Reine Angew. Math.},
   volume={700},
   date={2015},
   pages={175--207},
   issn={0075-4102},
 %  review={\MR{3318515}},
 %  doi={10.1515/crelle-2013-0013},
}

\bib{CFOSS}{article}{
   author={Cremona, J. E.},
   author={Fisher, T. A.},
   author={O'Neil, C.},
   author={Simon, D.},
   author={Stoll, M.},
   title={Explicit $n$-descent on elliptic curves. I. Algebra},
   journal={J. Reine Angew. Math.},
   volume={615},
   date={2008},
   pages={121--155},
   issn={0075-4102},
%   review={\MR{2384334 (2009g:11067)}},
 %  doi={10.1515/CRELLE.2008.012},
}

\bib{CreutzPhD}{article}{
   author={Creutz, Brendan},
   title={Explicit second p-descent on elliptic curves},
   note={Ph.D. thesis, Jacobs University},
   date={2010}
}

\bib{CreutzBLMS}{article}{
   author={Creutz, Brendan},
   title={Locally trivial torsors that are not Weil-Ch\^atelet divisible},
   journal={Bull. Lond. Math. Soc.},
   volume={45},
   date={2013},
   number={5},
   pages={935--942},
   issn={0024-6093},
 %  review={\MR{3104985}},
%   doi={10.1112/blms/bdt019},
}

\bib{CreutzMathComp}{article}{
   author={Creutz, Brendan},
   title={Second $p$-descents on elliptic curves},
   journal={Math. Comp.},
   volume={83},
   date={2014},
   number={285},
   pages={365--409},
   issn={0025-5718},
 %  review={\MR{3120595}},
 %  doi={10.1090/S0025-5718-2013-02713-5},
}

\bib{CreutzBF}{article}{
   author={Creutz, Brendan},
   title={Generalized Jacobians and explicit descents},
   note={to appear in Math. Comp.}
   eprint = {arXiv:1601.06445},
   date={2016}
}
		
\bib{CreutzViray}{article}{
   author={Creutz, Brendan},
   author={Viray, Bianca},
   title={Two torsion in the Brauer group of a hyperelliptic curve},
   journal={Manuscripta Math.},
   volume={147},
   date={2015},
   number={1-2},
   pages={139--167},
   issn={0025-2611},
 %  review={\MR{3336942}},
  % doi={10.1007/s00229-014-0721-7},
}

\bib{Pollatsek}{article}{
   author={Pollatsek, Harriet},
   title={First cohomology groups of some linear groups over fields of
   characteristic two},
   journal={Illinois J. Math.},
   volume={15},
   date={1971},
   pages={393--417},
   issn={0019-2082},
%   review={\MR{0280596 (43 \#6316)}},
}

\bib{PoonenRains}{article}{
   author={Poonen, Bjorn},
   author={Rains, Eric},
   title={Self cup products and the theta characteristic torsor},
   journal={Math. Res. Lett.},
   volume={18},
   date={2011},
   number={6},
   pages={1305--1318},
   issn={1073-2780},
%   review={\MR{2915483}},
%   doi={10.4310/MRL.2011.v18.n6.a18},
}

\bib{PoonenSchaefer}{article}{
   author={Poonen, Bjorn},
   author={Schaefer, Edward F.},
   title={Explicit descent for Jacobians of cyclic covers of the projective
   line},
   journal={J. Reine Angew. Math.},
   volume={488},
   date={1997},
   pages={141--188},
   issn={0075-4102},
%   review={\MR{1465369 (98k:11087)}},
}

\bib{PS_ctpairing}{article}{
   author={Poonen, Bjorn},
   author={Stoll, Michael},
   title={The Cassels-Tate pairing on polarized abelian varieties},
   journal={Ann. of Math. (2)},
   volume={150},
   date={1999},
   number={3},
   pages={1109--1149},
   issn={0003-486X},
%   review={\MR{1740984 (2000m:11048)}},
 %  doi={10.2307/121064},
}

\bib{PS_densities}{article}{
   author={Poonen, Bjorn},
   author={Stoll, Michael},
   title={A local-global principle for densities},
   conference={
      title={Topics in number theory},
      address={University Park, PA},
      date={1997},
   },
   book={
      series={Math. Appl.},
      volume={467},
      publisher={Kluwer Acad. Publ., Dordrecht},
   },
   date={1999},
   pages={241--244},
%   review={\MR{1691323 (2000e:11082)}},
}

\bib{SerreEC}{article}{
   author={Serre, Jean-Pierre},
   title={Propri\'et\'es galoisiennes des points d'ordre fini des courbes
   elliptiques},
   language={French},
   journal={Invent. Math.},
   volume={15},
   date={1972},
   number={4},
   pages={259--331},
   issn={0020-9910},
 %  review={\MR{0387283 (52 \#8126)}},
}

\bib{vdW}{article}{
   author={van der Waerden, B. L.},
   title={Die Seltenheit der reduziblen Gleichungen und der Gleichungen mit
   Affekt},
   language={German},
   journal={Monatsh. Math. Phys.},
   volume={43},
   date={1936},
   number={1},
   pages={133--147},
   issn={0026-9255},
  % review={\MR{1550517}},
  % doi={10.1007/BF01707594},
}

\bib{Wang}{article}{
	author={Wang, Xiaoheng},
	title={Maximal linear spaces contained in the base loci of pencils of quadrics},
	date={2013}
	eprint={arXiv:1302.2385}	
}

%\bib{Weil}{article}{
%   author={Weil, Andr{\'e}},
%   title={The field of definition of a variety},
%   journal={Amer. J. Math.},
%   volume={78},
%   date={1956},
%   pages={509--524},
%   issn={0002-9327},
%   review={\MR{0082726 (18,601a)}},
%}

			\end{biblist}
	\end{bibdiv}

\end{document}